\newtheorem{thm}{Theorem}[section]
 \newtheorem{cor}{Corollary}[section]
 \newtheorem{lem}{Lemma}[section]
 \newtheorem{prop}{Proposition}[section]
 \newtheorem{defn}{Definition}[section]
\newtheorem{rem}{Remark}[section]
\begin{document}
\begin{center}
{\large{\bf The frequency-localization technique and
minimal decay-regularity for Euler-Maxwell equations}}
\end{center}
\par
\begin{center}
\footnotesize{Jiang Xu}\\[2ex]
\footnotesize{Department of Mathematics, \\ Nanjing
University of Aeronautics and Astronautics, \\
Nanjing 211106, P.R.China,}\\
\footnotesize{jiangxu\underline{ }79@nuaa.edu.cn}\\
\vspace{10mm}

\footnotesize{Shuichi Kawashima}\\[2ex]
\footnotesize{Faculty of Mathematics, \\ Kyushu University, Fukuoka 819-0395, Japan,}\\
\footnotesize{kawashim@math.kyushu-u.ac.jp}\\
\end{center}
\vspace{6mm}

\begin{abstract}
Dissipative hyperbolic systems of \textit{regularity-loss} have been recently received increasing attention.
Usually, extra higher regularity is assumed to obtain the optimal decay estimates, in comparison with that
for the global-in-time existence of solutions.
In this paper, we develop a new frequency-localization time-decay property, which enables us to
overcome the technical difficulty and improve the minimal decay-regularity for dissipative systems. As an application, it is shown that the optimal decay rate of $L^1(\mathbb{R}^3)$-$L^2(\mathbb{R}^3)$ is available for Euler-Maxwell equations with the critical regularity $s_{c}=5/2$, that is, the extra higher regularity is not needed.
\end{abstract}

\noindent\textbf{AMS subject classification.} 35B35;\ 35L40;\ 35B40;\ 82D10\\
\textbf{Key words and phrases.} Frequency-localization; minimal decay regularity; critical Besov spaces; Euler-Maxwell equations

\section{Introduction}
In this paper, we are interested in compressible isentropic Euler-Maxwell equations in plasmas physics (see, for example, \cite{CJW,J}), which are given in the form
\begin{equation}
\left\{
\begin{array}{l}\partial_{t}n+\nabla\cdot(nu)=0,\\
 \partial_{t}(nu)+\nabla\cdot(nu\otimes u)+\nabla p(n)=-n(E+u\times
B)-nu,\\
\partial_{t}E-\nabla\times B=
nu,\\
\partial_{t}B+\nabla\times E=0,\\
 \end{array} \right.\label{R-E1}
\end{equation}
with constraints
\begin{equation}
\nabla\cdot E=n_{\infty}-n,\ \ \ \nabla\cdot B=0 \label{R-E2}
\end{equation}
for $(t,x)\in[0,+\infty)\times\mathbb{R}^{3}$.
Here the unknowns
$n>0, u\in\mathbb{R}^{3}$ are the density and the velocity of electrons,
and $E\in \mathbb{R}^{3}, B\in \mathbb{R}^{3}$
denote the electric field and magnetic field, respectively. The pressure $p(n)$ is a
given smooth function of $n$ satisfying $p'(n)>0$ for $n>0$. For the sake of simplicity, $n_{\infty}$ is assumed to be
a positive constant, which
stands for the density of positively charged background ions. Observe that system (\ref{R-E1}) admits a constant equilibrium state
$(n_{\infty},0,0,B_{\infty})$, which is regarded as vector in $\mathbb{R}^{10}$, where $B_{\infty}\in \mathbb{R}^{3}$ is an arbitrary fixed constant vector.
The main objective of this paper is to establish the large-time behavior for the corresponding Cauchy problem. For this purpose,  system (\ref{R-E1}) is supplemented with the initial data
\begin{equation}
(n,u,E,B)|_{t=0}=(n_{0},u_{0},E_{0},B_{0})(x),\ \ x\in \mathbb{R}^{3}. \label{R-E3}
\end{equation}
Clearly, it is not difficult to see that (\ref{R-E2}) can hold for any $t>0$ if the initial data satisfy the following compatible conditions
\begin{equation}
\nabla\cdot E_{0}=n_{\infty}-n_{0},\ \ \
\nabla\cdot B_{0}=0, \ \ x\in\mathbb{R}^{3}. \label{R-E4}
\end{equation}
Set $w=(n,u,E,B)^{\top}$ ($\top$ transpose) and $w_{0}=(n_{0},u_{0},E_{0},B_{0})^{\top}$.
Then (\ref{R-E1}) can be be written in the vector form
\begin{equation}
A^{0}(w)w_{t}+\sum_{j=1}^{3}A^{j}(w)w_{x_{j}}+L(w)w=0,\label{R-E5}
\end{equation}
where the coefficient matrices are given explicitly as
\begin{eqnarray*}
A^{0}(w)=\left(
           \begin{array}{cccc}
             a(n) & 0 & 0 & 0 \\
             0 & nI & 0 & 0 \\
             0 & 0 & I & 0 \\
             0 & 0 & 0 & I \\
           \end{array}
         \right),\ L(w)=\left(
       \begin{array}{cccc}
         0 & 0 & 0 & 0 \\
         0 & n(I-\Omega_{B}) & nI & 0 \\
         0 & -nI & 0 & 0 \\
         0 & 0 & 0 & 0 \\
       \end{array}
     \right),
     \end{eqnarray*}
\begin{eqnarray*}
\sum_{j=1}^{3}A^{j}(w)\xi_{j}=\left(
                                \begin{array}{cccc}
                                  a(n)(u\cdot\xi) & p'(n)\xi & 0 & 0 \\
                                  p'(n)\xi^{\top} & n(u\cdot\xi)I & 0 & 0 \\
                                  0 & 0 & 0 & -\Omega_{\xi} \\
                                  0 & 0 & \Omega_{\xi} & 0 \\
                                \end{array}
                              \right).
\end{eqnarray*}
Here, $a(n):=p'(n)/n$ is the enthalpy function, $I$ is the identity matrix of third order. For any $\xi=(\xi_{1},\xi_{2},\xi_{3})\in \mathbb{R}^3$, $\Omega_{\xi}$ is the skew-symmetric matrix defined by
\begin{eqnarray*}
\Omega_{\xi}=\left(
               \begin{array}{ccc}
                 0 & -\xi_{3} & \xi_{2} \\
                 \xi_{3} & 0 & -\xi_{1} \\
                 -\xi_{2} & \xi_{1} & 0 \\
               \end{array}
             \right)
\end{eqnarray*}
such that $\Omega_{\xi} E^{\top}=(\xi\times E)^{\top}$ (as a column vector in $\mathbb{R}^{3}$).

Clearly, (\ref{R-E5}) is a symmetric hyperbolic system, since
$A^{0}(w)$ is real symmetric and positive definite and $A^{j}(w)(j=1,2,3)$ are real symmetric. Generally, the main feature of (\ref{R-E5}) is the finite time blowup
of classical solutions even when the initial data are smooth and small. In one dimensional space, Chen, Jerome and Wang \cite{CJW} first constructed global weak solutions by using the Godunov scheme of the fractional step. By using the dissipative effect of damping terms, Peng Wang and Gu \cite{PWG} established the global existence of smooth solutions in the periodic domain. Duan \cite{D1} analyzed the regularity-loss mechanism in the dissipation part and constructed the global existence and time-decay estimates of smooth solutions. The first author \cite{X} made the best use of the coupling structure of each equation in (\ref{R-E5}) and constructed the global classical solutions in spatially critical Besov spaces. So far there are a number of efforts on the Euler-Maxwell system (\ref{R-E1}) with or without dissipation, see \cite{DLZ,GIP,GM,P1,P2,TWW,UK,UWK,XXK} and therein references.

For the Cauchy problem (\ref{R-E1})-(\ref{R-E3}), in this paper, we focus on the quantitative decay estimates of solutions toward the equilibrium state $w_{\infty}=(n_{\infty},0,0,B_{\infty})$. Ueda, Wang and the second author \cite{UWK} studied the dissipative structure of (\ref{R-E5}), which is weaker than the standard one characterized in \cite{BHN,HN,Ka1,KY,KY2,SK,UKS,XK1,XK2,Y}. More precisely, the dissipative matrix $L(w)$ is nonnegative definite, however, $L(w)$ is not real symmetric, which leads to the regularity-loss not only in the dissipation part of the energy estimate but also in the decay estimate for the linearized system. To clarify it, let us reformulate (\ref{R-E1}) as
the linearized perturbation form around the equilibrium state $w_{\infty}$:
\begin{equation}
\left\{
\begin{array}{l}\partial_{t}n+n_{\infty}\mathrm{div}\upsilon=0,\\
\partial_{t}\upsilon+a_{\infty}\nabla n+E+\upsilon\times B+\upsilon=(\mathrm{div}q_{2}+r_{2})/n_{\infty},
\\
\partial_{t}E-\nabla\times B-n_{\infty}\upsilon=0,\\
\partial_{t}B+\nabla\times E=0,\\
 \end{array} \right.\label{R-E6}
\end{equation}
where $\upsilon=nu/n_{\infty},\  a_{\infty}=p'(n_{\infty})/n_{\infty}$,
$$q_{2}=-n_{\infty}^2\upsilon\otimes \upsilon/n-[p(n)-p(n_{\infty})-p'(n_{\infty})(n-n_{\infty})]I $$
and
$$r_{2}=-(n-n_{\infty})E-n_{\infty}\upsilon\times(B-B_{\infty}).$$
We put $z:=(\rho, \upsilon, E, h)^{\top}$, where $\rho=n-n_{\infty}$ and $h=B-B_{\infty}$. The corresponding initial data are given by
\begin{equation}
z|_{t=0}=(\rho_{0}, \upsilon_{0}, E_{0}, h_{0})^{\top}(x) \label{R-E7}
\end{equation}
with $\rho_{0}=n_{0}-n_{\infty},\ \upsilon_{0}=n_{0}u_{0}/n_{\infty}$ and $h_{0}=B_{0}-B_{\infty}$.
System (\ref{R-E6}) can be also rewrite in the vector form as
\begin{equation}
A^{0}z_{t}+\sum_{j=1}^{3}A^{j}z_{x_{j}}+Lz=\sum_{j=1}^{3}Q_{x_{j}}+R, \label{R-E8}
\end{equation}
where $A^{0}, A^{j} $ and $L$ are the constant matrices in (\ref{R-E5}) with $w=w_{\infty}$, $Q(z)=(0, q^{j}_{2}/n_{\infty}, 0, 0)^{\top}$ and $R(z)=(0,r_{2}/n_{\infty}, 0,0)^{\top}$. Observe that $Q(z)=O(|(\rho,\upsilon)|^2)$ and $R(z)=O(\rho |E|+|\upsilon||h|)$.
The linearized form of (\ref{R-E8}) reads as
\begin{equation}
A^{0}\partial_{t}z_{\mathcal{L}}+\sum_{j=1}^{3}A^{j}\partial_{x_{j}}z_{\mathcal{L}}+Lz_{\mathcal{L}}=0, \label{R-E9}
\end{equation}
and the corresponding initial data $z_{0}:=(\rho_{0}, \upsilon_{0}, E_{0}, h_{0})^{\top}$ satisfy
\begin{equation}
\mathrm{div}E_{0}=-\rho_{0},\ \ \ \mathrm{div}h_{0}=0. \label{R-E10}
\end{equation}
In \cite{UK}, Ueda and the second author showed that the Fourier image of $z_{\mathcal{L}}$ satisfies the following pointwise estimate
\begin{eqnarray}
|\widehat{z_{\mathcal{L}}}(t,\xi)|\lesssim e^{-c_{0}\eta_{0}(\xi)t} |\hat{z}_{0}|\label{R-E11}
\end{eqnarray}
for any $t\geq0$ and $\xi\in \mathbb{R}^{3}$, where the dissipative rate $\eta_{0}(\xi)=|\xi|^2/(1+|\xi|^2)^2$ and $c_{0}>0$ is a constant.
Furthermore, the decay estimate of $z_{\mathcal{L}}$ was followed:
\begin{eqnarray}
\|\partial^{k}_{x}z_{\mathcal{L}}\|_{L^2}\lesssim (1+t)^{-3/4-k/2}\|z_{0}\|_{L^1}+(1+t)^{-\ell/2}\|\partial^{k+\ell}_{x}z_{0}\|_{L^2}, \label{R-E12}
\end{eqnarray}
where $k$ and $\ell$ are non-negative integers.

\begin{rem}\label{rem1.2}
The decay (\ref{R-E12}) is of the regularity-loss type, since $(1+t)^{-\ell/2}$ is created by assuming the additional $\ell$-th order regularity on the initial data.
As a matter of fact, similar dissipative mechanisms also appear in the study of other dissipative systems which were investigated by the second author and his collaborators in recent several years, for instance, Timoshenko systems in \cite{IHK,IK,LK}, hyperbolic-elliptic systems of radiating gas in \cite{HK}, a plate equation with rotational inertia effect in \cite{SK2}, hyperbolic systems of viscoelasticity in \cite{Dh,DNK}, as well as the Vlasov-Maxwell-Boltzmann system studied by Duan and Strain et al. (see for example, \cite{D2,DS}).
\end{rem}

Furthermore, based on the decay estimate (\ref{R-E12}) of linearized solutions, the decay estimates of Euler-Maxwell equations (\ref{R-E6})-(\ref{R-E7}) can be obtained by a combination of the time weighted energy method and the semigroup approach. To overcome the major difficulty arising from the weaker mechanism of regularity-loss, the decay regularity index is usually needed to be sufficiently large, for instance, $s\geq 13$ in \cite{D1}, $s\geq 6$ in \cite{UK} and therein references. Very recently, Tan, Wang and Wang \cite{TWW} obtained various decay rates of the solution and its derivatives for (\ref{R-E1})-(\ref{R-E3})
by a regularity interpolation trick. Actually, their decay results may be available in the price of higher regularity of initial data. In the present paper, we investigate a different but interesting problem in comparison with previous efforts. That is, which regularity index does characterize
the minimal decay regularity for (\ref{R-E1})-(\ref{R-E3})? For this motivation, we formulate a definition on the ``minimal decay regularity".
\begin{defn}\label{defn1.1}
If the optimal decay rate of $L^{1}(\mathbb{R}^n)$-$L^2(\mathbb{R}^n)$ type is achieved under the lowest regularity assumption, then the lowest index is called the minimal decay regularity for dissipative systems of regularity-loss, which is labelled as $s_{D}$.
\end{defn}
Obviously, following from Definition \ref{defn1.1}, we found $s_{D}\leq13$ in \cite{D1} and $s_{D}\leq6$ in \cite{UK}. Based on the recent works \cite{X,XXK},
in Besov space with the regularity $5/2$, is it possible to get optimal decay rates in functional spaces with relatively lower regularity?
The current paper attempts to seek the minimal decay regularity for (\ref{R-E1})-(\ref{R-E3}) such that $s_{D}\leq5/2$. To achieve it, actually, we need a new frequency-localization time-decay inequality for the dissipative rate $\eta_{0}(\xi)=|\xi|^2/(1+|\xi|^2)^2$. On the other hand,
other dissipative systems of regularity-loss as in \cite{Dh,DNK,D2,DS,HK,IHK,IK,LK,SK2} have been recently investigated, where the dissipative rate is subjected to the $(a,b)$-type: $\eta(\xi)=|\xi|^{2a}/(1+|\xi|^2)^{b}$. Therefore, we are going to develop the following general frequency-localization time-decay inequality for forthcoming possible works.

\begin{thm}\label{thm1.1}
Let $\eta(\xi)$ be a positive, continuous and real-valued function in $\mathbb{R}^{n}$ satisfying
\begin{eqnarray}\eta(\xi)\sim\left\{
                 \begin{array}{ll}
                   |\xi|^{\sigma_{1}}, &  |\xi|\rightarrow 0; \\
                   |\xi|^{-\sigma_{2}}, & |\xi|\rightarrow\infty;
                 \end{array}
               \right. \label{R-E13}
\end{eqnarray}
for $\sigma_{1}, \sigma_{2}>0$.

If $f\in \dot{B}^{s+\ell}_{r,\alpha}(\mathbb{R}^{n})\cap \dot{B}^{-\varrho}_{2,\infty}(\mathbb{R}^{n})$ for $s\in \mathbb{R}, \varrho\in \mathbb{R}$ and $1\leq \alpha\leq\infty$ such that $s+\varrho>0$, then it holds that
\begin{eqnarray}
&&\Big\|2^{qs}\|\widehat{\dot{\Delta}_{q}f}e^{-\eta(\xi)t}\|_{L^{2}}\Big\|_{l^{\alpha}_{q}}\nonumber\\ &\lesssim & \underbrace{(1+t)^{-\frac{s+\varrho}{\sigma_{1}}}\|f\|_{\dot{B}^{-\varrho}_{2,\infty}}}_{Low-frequency\  Estimate}
+\underbrace{(1+t)^{-\frac{\ell}{\sigma_{2}}+\gamma_{\sigma_{2}}(r,2)}\|f\|_{\dot{B}^{s+\ell}_{r,\alpha}}}_{High-frequency\  Estimate}, \label{R-E14}
\end{eqnarray}
for $\ell>n(\frac{1}{r}-\frac{1}{2})$\ \footnote{Let us remark that $\ell\geq0$ in the case of $r=2$.} with $1\leq r\leq2$,
where $\gamma_\sigma(r,p):=\frac{n}{\sigma}(\frac{1}{r}-\frac{1}{p})$.
\end{thm}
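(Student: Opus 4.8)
The plan is to insert a Littlewood--Paley decomposition and split the set of dyadic indices as $\mathbb{Z}=\{q\le q_1\}\cup\{q_1<q<q_2\}\cup\{q\ge q_2\}$, where $2^{q_1}$ is taken small enough and $2^{q_2}$ large enough that the asymptotics \eqref{R-E13} are in force on the corresponding annuli. Since $\widehat{\dot{\Delta}_q f}$ is supported in $\{|\xi|\sim 2^q\}$, the elementary pointwise bound $|\widehat{\dot{\Delta}_q f}(\xi)\,e^{-\eta(\xi)t}|\le|\widehat{\dot{\Delta}_q f}(\xi)|\,e^{-\eta_q t}$ with $\eta_q:=\inf\{\eta(\xi):\xi\in\mathrm{supp}\,\widehat{\dot{\Delta}_q f}\}$ gives, via Plancherel, $\|\widehat{\dot{\Delta}_q f}\,e^{-\eta(\xi)t}\|_{L^2}\le e^{-\eta_q t}\|\dot{\Delta}_q f\|_{L^2}$; and \eqref{R-E13} yields $\eta_q\gtrsim 2^{q\sigma_1}$ for $q\le q_1$, $\eta_q\gtrsim 2^{-q\sigma_2}$ for $q\ge q_2$, and $\eta_q\gtrsim 1$ on the finitely many intermediate blocks (here one uses positivity and continuity of $\eta$ on a compact annulus). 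This reduces the theorem to two elementary $q$-summation estimates, one per dyadic endpoint regime, plus a harmless intermediate contribution.

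In the low-frequency regime I would use $\|\dot{\Delta}_q f\|_{L^2}\le 2^{q\varrho}\|f\|_{\dot{B}^{-\varrho}_{2,\infty}}$, which is just the definition of the $\dot{B}^{-\varrho}_{2,\infty}$-norm, so that the part of the left-hand side of \eqref{R-E14} coming from $\{q\le q_1\}$ is bounded by
\[
\Big\|2^{q(s+\varrho)}e^{-c\,2^{q\sigma_1}t}\Big\|_{\ell^\alpha_q(q\le q_1)}\,\|f\|_{\dot{B}^{-\varrho}_{2,\infty}}.
\]
The analytic core here is the claim that, because $s+\varrho>0$,
\[
\Big\|2^{q(s+\varrho)}e^{-c\,2^{q\sigma_1}t}\Big\|_{\ell^\alpha_q(q\le q_1)}\lesssim(1+t)^{-\frac{s+\varrho}{\sigma_1}},
\]
which I would prove by treating $t\le 1$ (where $e^{-c2^{q\sigma_1}t}\le 1$ and the geometric $\ell^\alpha$-series over $q\le q_1$ converges since $s+\varrho>0$) and $t\ge 1$ separately, and in the latter case splitting the sum at $2^q\sim t^{-1/\sigma_1}$, using $\sup_{x>0}x^{a}e^{-cx}<\infty$ on the one part and the convergence of $\sum_q x_q^{a}e^{-cx_q}$ along the geometric progression $x_q=2^{q\sigma_1}t$ on the other.

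In the high-frequency regime I would instead invoke Bernstein's inequality $\|\dot{\Delta}_q f\|_{L^2}\lesssim 2^{q n(\frac1r-\frac12)}\|\dot{\Delta}_q f\|_{L^r}$ (valid for $1\le r\le 2$), which rewrites each block norm in the $\dot{B}^{s+\ell}_{r,\alpha}$-scaling at the cost of a factor $2^{-q\beta}$ with $\beta:=\ell-n(\frac1r-\frac12)\ge 0$ (strictly positive unless $r=2$). Factoring the $\ell^\infty_q$-bound out of the $\ell^\alpha_q$-sum, the contribution of $\{q\ge q_2\}$ is
\[
\lesssim\Big(\sup_{q\ge q_2}2^{-q\beta}e^{-c\,2^{-q\sigma_2}t}\Big)\,\|f\|_{\dot{B}^{s+\ell}_{r,\alpha}},
\]
and the second elementary fact, $\sup_{q\ge q_2}2^{-q\beta}e^{-c\,2^{-q\sigma_2}t}\lesssim(1+t)^{-\beta/\sigma_2}$, follows by locating the maximizing index $q\sim\frac1{\sigma_2}\log_2 t$ for $t$ large (and bounding the exponential by $1$ for $t$ small, which also covers the degenerate case $\beta=0$); since $\beta/\sigma_2=\ell/\sigma_2-\gamma_{\sigma_2}(r,2)$ this is exactly the advertised rate. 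Finally, the intermediate blocks contribute at most $e^{-ct}$ times either Besov norm, which is dominated by both displayed rates, so adding the three pieces yields \eqref{R-E14}. I expect the only real work to be the clean formulation of the two $q$-summation lemmas above --- in particular producing $(1+t)$ rather than $t$ and covering the endpoint $\alpha=\infty$ by the same argument --- whereas the reductions through Plancherel, Bernstein, and the defining norms are routine.
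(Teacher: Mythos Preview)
Your proposal is correct and follows the same overall low/high-frequency split as the paper, but your treatment of the high-frequency part differs from the paper's in a way worth noting. The paper does not pass to the block infimum $\eta_q$ and then apply Bernstein; instead, on $\{q\ge q_0\}$ it works directly in Fourier space, writing
\[
\|\widehat{\dot{\Delta}_q f}\,e^{-c_0|\xi|^{-\sigma_2}t}\|_{L^2(|\xi|\ge R_0)}
\le \||\xi|^\ell\widehat{\dot{\Delta}_q f}\|_{L^{r'}}\cdot
\Big\|\,|\xi|^{-\ell}e^{-c_0 t|\xi|^{-\sigma_2}}\Big\|_{L^m(|\xi|\ge R_0)},
\]
applying H\"older with $\frac{1}{r'}+\frac{1}{m}=\frac{1}{2}$, then Hausdorff--Young to convert the first factor to $2^{q\ell}\|\dot{\Delta}_q f\|_{L^r}$, and finally computing the second factor by a scaling change of variables to obtain the rate $(1+t)^{-\ell/\sigma_2+\gamma_{\sigma_2}(r,2)}$. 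Your route replaces this H\"older/Hausdorff--Young/integral step by the block-level estimate $e^{-\eta(\xi)t}\le e^{-c\,2^{-q\sigma_2}t}$ plus Bernstein $\|\dot{\Delta}_q f\|_{L^2}\lesssim 2^{qn(\frac1r-\frac12)}\|\dot{\Delta}_q f\|_{L^r}$, reducing the decay rate to the elementary sup $\sup_{q}2^{-q\beta}e^{-c\,2^{-q\sigma_2}t}$. Both computations produce the same exponent; yours is slightly more elementary and stays entirely at the dyadic-block level, while the paper's has the minor advantage that the time factor is extracted as a single $L^m$ norm independent of $q$ before any $\ell^\alpha_q$ summation. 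The low-frequency arguments are essentially identical (your careful $t\le1$/$t\ge1$ split makes explicit what the paper's estimate \eqref{R-E24}--\eqref{R-E25} leaves implicit), and your additional intermediate zone $\{q_1<q<q_2\}$ is simply absorbed in the paper into a single cutoff $R_0$, which is legitimate since positivity and continuity of $\eta$ on a compact annulus allow one to extend both asymptotic lower bounds up to $|\xi|=R_0$.
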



\begin{rem}
Compared to (\ref{R-E12}), more general time-decay estimate (\ref{R-E14}) is endowed with some novelty.  To the best of our knowledge,
not only does the high-frequency part decay in time with algebraic rates of any order as long as the function is spatially regular enough, but also
decay information related to the localized integrability $r$ is available. Indeed, different values of $r$ (for example, $r = 1$ or $r = 2$)
enable us to obtain the desired minimal decay regularity for dissipative systems. Secondly,
note that the embedding $L^p(\mathbb{R}^{n})\hookrightarrow\dot{B}^{-\varrho}_{2,\infty}(\mathbb{R}^{n})(\varrho=n(1/p-1/2), \ 1\leq p<2)$ in Lemma \ref{lem2.3}, it is shown that
the low-frequency regularity is less restrictive than the usual $L^p$ space.
Additionally, the regularity index $s\in \mathbb{R}$ relaxes as the negative real constant rather than non-negative integers in (\ref{R-E12}) only.
\end{rem}

For the convenience of this paper, we give the direct consequence for the dissipative rate of $(1,2)$ type. Obviously, $\sigma_{1}=\sigma_{2}=2$ in this case.
\begin{cor}\label{cor1.1}
Let $\eta(\xi)=|\xi|^2/(1+|\xi|^2)^2$. If $f\in \dot{B}^{s+\ell}_{r,\alpha}(\mathbb{R}^{n})\cap \dot{B}^{-\varrho}_{2,\infty}(\mathbb{R}^{n})$ for $s\in \mathbb{R}, \varrho\in \mathbb{R}$ and $1\leq \alpha\leq\infty$ such that $s+\varrho>0$, then it holds that
\begin{eqnarray}
&&\Big\|2^{qs}\|\widehat{\dot{\Delta}_{q}f}e^{-\eta(\xi)t}\|_{L^{2}}\Big\|_{l^{\alpha}_{q}}\nonumber\\ &\lesssim & \underbrace{(1+t)^{-\frac{s+\varrho}{2}}\|f\|_{\dot{B}^{-\varrho}_{2,\infty}}}_{Low-frequency\  Estimate}
+\underbrace{(1+t)^{-\frac{\ell}{2}+\frac{n}{2}(\frac{1}{r}-\frac{1}{2})}\|f\|_{\dot{B}^{s+\ell}_{r,\alpha}}}_{High-frequency\  Estimate}, \label{R-E15}
\end{eqnarray}
for $\ell>n(\frac{1}{r}-\frac{1}{2})$ with $1\leq r<2$, or $\ell\geq0$ with $r=2$.
\end{cor}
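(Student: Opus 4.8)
The plan is to prove the general Theorem~\ref{thm1.1}: Corollary~\ref{cor1.1}, and hence the estimate (\ref{R-E15}), is then just the instance $\sigma_1=\sigma_2=2$, since $\eta(\xi)=|\xi|^2/(1+|\xi|^2)^2\sim|\xi|^2$ near $\xi=0$ and $\sim|\xi|^{-2}$ as $|\xi|\to\infty$, and $\gamma_2(r,2)=\frac n2(\frac1r-\frac12)$. I would cut the homogeneous Littlewood--Paley decomposition at frequency $1$, bounding the left-hand side of (\ref{R-E14}) by the $\ell^\alpha_q$-sum of a low-frequency piece ($q\le0$) and a high-frequency piece ($q\ge1$). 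Since $|\xi|\simeq 2^q$ on $\mathrm{supp}\,\widehat{\dot{\Delta}_q f}$ and $\eta$ is positive, continuous and satisfies (\ref{R-E13}), a compactness argument on the intermediate annuli upgrades (\ref{R-E13}) to genuine one-sided bounds $\eta(\xi)\gtrsim|\xi|^{\sigma_1}$ for $|\xi|\lesssim1$ and $\eta(\xi)\gtrsim|\xi|^{-\sigma_2}$ for $|\xi|\gtrsim1$; hence $e^{-\eta(\xi)t}\lesssim e^{-c2^{q\sigma_1}t}$ on the low block and $e^{-\eta(\xi)t}\lesssim e^{-c2^{-q\sigma_2}t}$ on the high block, and it is these two decaying factors that make the two Besov norms on the right of (\ref{R-E14}) enter with their respective rates.

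\textbf{Low frequencies.} Here I would use only the crude bound $\|\dot{\Delta}_q f\|_{L^2}\le 2^{q\varrho}\|f\|_{\dot{B}^{-\varrho}_{2,\infty}}$ (definition of the space), so that the $q$-th term is $\lesssim 2^{q(s+\varrho)}e^{-c2^{q\sigma_1}t}\|f\|_{\dot{B}^{-\varrho}_{2,\infty}}$ for $q\le0$, and what remains is the scalar estimate $\bigl\|2^{q(s+\varrho)}e^{-c2^{q\sigma_1}t}\bigr\|_{\ell^\alpha_q(q\le0)}\lesssim(1+t)^{-(s+\varrho)/\sigma_1}$. I would prove it by splitting the dyadic scales at $2^q\simeq t^{-1/\sigma_1}$: below that scale the exponential is harmless and, because $s+\varrho>0$, the geometric series $\sum 2^{q\alpha(s+\varrho)}$ equals its top term $\simeq t^{-\alpha(s+\varrho)/\sigma_1}$ up to a constant; above it, writing $2^{q\sigma_1}t$ as a dyadic variable tending to $\infty$, the tail $\sum(\cdot)^{\alpha(s+\varrho)/\sigma_1}e^{-c\alpha(\cdot)}$ converges and contributes the same factor; for $0\le t\le1$ the whole sum is a constant. (For $\alpha=\infty$ this reduces to the one-line maximization $\sup_{x\in(0,1]}x^{s+\varrho}e^{-cx^{\sigma_1}t}\lesssim(1+t)^{-(s+\varrho)/\sigma_1}$.)

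\textbf{High frequencies.} Now the spatial integrability $r$ enters through Bernstein's inequality ($1\le r\le2$), $\|\dot{\Delta}_q f\|_{L^2}\lesssim 2^{qn(1/r-1/2)}\|\dot{\Delta}_q f\|_{L^r}$, so the $q$-th term is $\lesssim 2^{-q\mu}e^{-c2^{-q\sigma_2}t}\bigl(2^{q(s+\ell)}\|\dot{\Delta}_q f\|_{L^r}\bigr)$ with $\mu:=\ell-n(1/r-1/2)>0$ (note that $s$ cancels in the scalar factor, which is exactly why $s\in\mathbb{R}$ is admitted). Taking the $\ell^\alpha_q$-norm over $q\ge1$ and pulling the scalar factor out in $\ell^\infty_q$ leaves $\|f\|_{\dot{B}^{s+\ell}_{r,\alpha}}$ times $\sup_{q\ge1}2^{-q\mu}e^{-c2^{-q\sigma_2}t}$, and the elementary maximization $\sup_{x\in(0,1/2]}x^{\mu}e^{-cx^{\sigma_2}t}\lesssim(1+t)^{-\mu/\sigma_2}$ (substitute $y=x^{\sigma_2}t$ and use $\mu>0$) produces precisely the exponent $-\ell/\sigma_2+\gamma_{\sigma_2}(r,2)$; the borderline case $\ell=0$, $r=2$ is allowed because then $\mu=0$ and the supremum is simply $\le1$.

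\textbf{Main obstacle.} Adding the two blocks yields (\ref{R-E14}). The only genuinely delicate step is the low-frequency $\ell^\alpha$-summation: one has to recover the sharp rate $(1+t)^{-(s+\varrho)/\sigma_1}$ with no loss whatsoever, which is why the dyadic split around $2^q\simeq t^{-1/\sigma_1}$ together with the strict positivity $s+\varrho>0$ are both indispensable. By contrast the high-frequency analysis is soft --- any regularity surplus $\ell>n(1/r-1/2)$ is converted into arbitrary algebraic time decay by a single scalar maximization --- and it is exactly the freedom to choose $r$ there (for instance $r=1$ versus $r=2$) that will later deliver the improved minimal decay-regularity.
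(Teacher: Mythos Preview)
Your proposal is correct and the overall architecture---cut the Littlewood--Paley decomposition at unit frequency, treat the low block via the $\dot B^{-\varrho}_{2,\infty}$ norm and the dyadic sum $\sum 2^{q(s+\varrho)}e^{-c2^{q\sigma_1}t}$, treat the high block by trading $\ell$ derivatives for the factor $e^{-c2^{-q\sigma_2}t}$---matches the paper's proof of Theorem~\ref{thm1.1} exactly.

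The one genuine technical difference is in the high-frequency step. The paper works on the Fourier side: it writes $\|\widehat{\dot\Delta_q f}\,e^{-c_0|\xi|^{-\sigma_2}t}\|_{L^2}\le\||\xi|^{\ell}\widehat{\dot\Delta_q f}\|_{L^{r'}}\bigl\||\xi|^{-\ell}e^{-c_0|\xi|^{-\sigma_2}t}\bigr\|_{L^m(|\xi|\ge R_0)}$ by H\"older with $\frac1{r'}+\frac1m=\frac12$, applies Hausdorff--Young to the first factor, and then computes the kernel $L^m$-norm by a change of variable to get $(1+t)^{-\ell/\sigma_2+\gamma_{\sigma_2}(r,2)}$. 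You instead stay on the physical side: Bernstein gives $\|\dot\Delta_q f\|_{L^2}\lesssim 2^{qn(1/r-1/2)}\|\dot\Delta_q f\|_{L^r}$, the exponential is replaced block-wise by $e^{-c2^{-q\sigma_2}t}$, and the decay rate drops out of the scalar maximization $\sup_{q\ge1}2^{-q\mu}e^{-c2^{-q\sigma_2}t}$. Your route is a bit more elementary (no Hausdorff--Young, no explicit kernel integral) and makes transparent why the constraint $\ell>n(1/r-1/2)$ is exactly $\mu>0$; the paper's route has the minor advantage that the kernel integral is over the full region $|\xi|\ge R_0$, so the decay constant is manifestly independent of $q$ without a separate $\ell^\infty_q$ extraction. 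Either way the low-frequency analysis is identical to yours, and the specialization to $\sigma_1=\sigma_2=2$ giving Corollary~\ref{cor1.1} is immediate.
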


Recalling those results in \cite{X,XXK}, we have the global-in-time existence of classical solutions to  (\ref{R-E1})-(\ref{R-E3}) in spatially critical  Besov spaces.

\begin{thm}\label{thm1.2}
Suppose that the initial data satisfy $w_{0}-w_{\infty}\in B^{5/2}_{2,1}(\mathbb{R}^3)$ and compatible conditions (\ref{R-E4}).
There exists a positive constant $\delta_{0}$ such that if
\begin{equation}
I_{0}:=\|w_{0}-w_{\infty}\|_{B^{5/2}_{2,1}}\leq \delta_{0},\label{R-E16}
\end{equation}
then the system (\ref{R-E1})-(\ref{R-E3}) admits a unique global solution $w(t,x)$ satisfying
\begin{eqnarray*}
w(t,x)\in \mathcal{C}^{1}([0,\infty)\times \mathbb{R}^3)
\end{eqnarray*}
and
\begin{eqnarray*}
w-w_{\infty} \in\widetilde{\mathcal{C}}(B^{5/2}_{2,1}(\mathbb{R}^3))\cap\widetilde{\mathcal{C}}^{1}(B^{3/2}_{2,1}(\mathbb{R}^3)).
\end{eqnarray*}
Moreover, the following energy inequality holds
\begin{equation}
N_{0}(t)+D_{0}(t)\lesssim I_{0},\label{R-E17}
\end{equation}
where $N_{0}(t):=\|w-w_{\infty}\|_{\widetilde{L}^{\infty}_{t}(B^{5/2}_{2,1})}$ and $D_{0}(t):=\|(n-n_{\infty},u)\|_{\widetilde{L}^{2}_{t}(B^{5/2}_{2,1})}
+\|E\|_{\widetilde{L}^{2}_{t}(B^{3/2}_{2,1})}+\|\nabla B\|_{\widetilde{L}^{2}_{t}(B^{1/2}_{2,1})}$ for any $t>0$.
\end{thm}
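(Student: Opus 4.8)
The plan is to obtain the global solution by combining a local existence result in the critical space $B^{5/2}_{2,1}$ with a uniform a priori estimate that closes under the smallness hypothesis (\ref{R-E16}), and then to invoke a standard continuation argument. First I would establish local-in-time existence and uniqueness for the symmetric hyperbolic system (\ref{R-E5}). Since $5/2>n/2=3/2$, the space $B^{5/2}_{2,1}(\mathbb{R}^3)$ is an algebra embedded in $W^{1,\infty}$, so the quasilinear coefficients $A^0(w),A^j(w)$ and the source $L(w)w$ are controlled by composition and product estimates; a Friedrichs-type iteration together with frequency-localized $L^2$ energy estimates produces a local solution on some $[0,T_*)$, and uniqueness follows from a stability estimate carried out at the lower level $B^{3/2}_{2,1}$. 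The time regularity $w-w_{\infty}\in\widetilde{\mathcal{C}}^{1}(B^{3/2}_{2,1})$ is then read off directly from $w_t=-(A^0)^{-1}[\sum_j A^j w_{x_j}+Lw]$, and the embeddings give $w\in\mathcal{C}^{1}([0,T_*)\times\mathbb{R}^3)$.

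The analytical core is the uniform a priori estimate for the perturbation $z=(\rho,\upsilon,E,h)^{\top}$ solving (\ref{R-E8}). I would apply the homogeneous block $\dot{\Delta}_q$ to (\ref{R-E8}), take the $L^2$ inner product of the localized system with $A^0\dot{\Delta}_q z$, and use the symmetry of $A^0,A^j$ together with the nonnegativity of $L$ to obtain a block energy identity. Because $L$ is only nonnegative and fails to be symmetric — the source of the regularity-loss — the damping directly controls only the velocity component $\upsilon$. To recover dissipation for $(\rho,E,h)$ I would add carefully weighted interaction terms built from the coupling structure of (\ref{R-E6}): the relation $\partial_t\upsilon+a_{\infty}\nabla\rho+E+\upsilon=\ldots$ lets the damped $\upsilon$ transmit dissipation to $\rho$ and $E$, while the Maxwell block $\partial_t E-\nabla\times h=\ldots$, $\partial_t h+\nabla\times E=0$ transmits it to $h$ only with a loss of derivatives. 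This is precisely what forces the staggered regularity levels in $D_0(t)$: full dissipation of $(n-n_{\infty},u)$ at $B^{5/2}_{2,1}$, of $E$ at $B^{3/2}_{2,1}$, and of $\nabla B$ at $B^{1/2}_{2,1}$. Summing the resulting block inequalities against $2^{2qs}$ in $\ell^1_q$ yields a Lyapunov functional $\mathcal{E}(t)\sim N_0(t)^2$ satisfying $\frac{d}{dt}\mathcal{E}+c\,\mathcal{D}\lesssim(\text{nonlinear terms})$, with $\mathcal{D}\sim D_0(t)^2$.

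The nonlinear contributions I would bound using the algebra property of $B^{5/2}_{2,1}$ and $B^{3/2}_{2,1}$. The quadratic sources $Q(z)=O(|(\rho,\upsilon)|^2)$ and $R(z)=O(\rho|E|+|\upsilon||h|)$ are estimated by paraproduct and product laws; since the coefficients in (\ref{R-E8}) are constant there are no genuine commutators at this stage (these appear only in the quasilinear local-existence step, where they are handled by the standard commutator lemma). The key point is that every such term has the form $\sqrt{\mathcal{E}}\,\mathcal{D}$ or is absorbable into it, so that under (\ref{R-E16}) the factor $\sqrt{\mathcal{E}}\lesssim I_0$ can be made small enough to absorb them into $c\,\mathcal{D}$. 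Integrating then gives $\mathcal{E}(t)+\int_0^t\mathcal{D}\lesssim\mathcal{E}(0)\lesssim I_0^2$, which is exactly the a priori bound $N_0(t)+D_0(t)\lesssim I_0$ of (\ref{R-E17}).

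Finally I would close the argument by a continuity method: the local solution and the a priori estimate show that the set of times on which $N_0(t)\leq C I_0$ holds is both open and closed in $[0,T_*)$, and the uniform bound precludes blow-up, so $T_*=\infty$ and (\ref{R-E17}) holds for all $t>0$. The hardest step is unquestionably the construction of the compensating Lyapunov functional: because the Shizuta–Kawashima condition holds only in the degenerate, regularity-loss sense, one must track the derivative loss through the Maxwell coupling and verify that the weaker dissipation available for $B$ — and the one-derivative-weaker dissipation for $E$ — is still strong enough to absorb the nonlinear interactions without exceeding the $B^{5/2}_{2,1}$ regularity budget; this is where the coupling structure exploited in \cite{X} is essential.
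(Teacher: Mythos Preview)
The paper does not actually prove Theorem~\ref{thm1.2}: the sentence immediately preceding its statement reads ``Recalling those results in \cite{X,XXK}, we have the global-in-time existence \ldots,'' so the theorem is simply quoted from the earlier works \cite{X} and \cite{XXK}. (The heading of Section~\ref{sec:4}, ``The proof of Theorem~\ref{thm1.2},'' is a mislabel; that section proves Lemma~\ref{lem4.1} and Theorem~\ref{thm1.3}.) Your sketch---local existence in the critical algebra $B^{5/2}_{2,1}$, frequency-localized energy estimates with a compensating Lyapunov functional exploiting the Euler--Maxwell coupling to recover the staggered dissipation $(\rho,\upsilon)\in B^{5/2}_{2,1}$, $E\in B^{3/2}_{2,1}$, $\nabla h\in B^{1/2}_{2,1}$, nonlinear closure via product laws, and a continuation argument---is exactly the strategy carried out in \cite{X,XXK}, so your proposal is correct and agrees with the paper's (cited) proof.

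Two small cosmetic points. First, since the target norms $N_0,D_0$ are inhomogeneous Chemin--Lerner norms, in practice one localizes with the inhomogeneous blocks $\Delta_q$ (including $q=-1$) rather than $\dot{\Delta}_q$; this does not change the argument. Second, for the quasilinear system (\ref{R-E5}) the localized energy identity does produce commutators $[\Delta_q,A^j(w)]\partial_{x_j}w$ already at the a~priori stage, not only in the local-existence iteration; in \cite{X} these are absorbed by the standard commutator lemma and contribute a term of the form $\sqrt{\mathcal{E}}\,\mathcal{D}$, consistent with your closure.
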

The reader is referred to Sect.\ref{sec:2} for the space notations. With such low regularity, some decay techniques used in \cite{UK} fail. For instance, Ueda and the second author performed the time-weighted energy estimate related to the norm
$$W^{\bot}(t):=\sup_{0\leq \tau \leq t}(1+\tau)\|(\rho,\upsilon,E)\|_{W^{1,\infty}}$$
to eliminate the difficulty of regularity-loss in the semigroup approach. Consequently, higher regularity was needed to
bound the norm according to Sobolev embedding theorems. In the present paper, we introduce a new strategy to
get around the main obstruction. Based on the Littlewood-Paley pointwise energy estimate (\ref{R-E30}),
we can skip the usual semigroup approach as in \cite{UK,XK2}. Here, localized energy estimates
for (\ref{R-E6})-(\ref{R-E7}) are performed in terms of high-frequency and low-frequency decompositions.
It is worth noting that the time-decay inequality in Corollary \ref{cor1.1} indeed plays the key role to
overcome the weak dissipative mechanism of regularity-loss at the high-frequency. Precisely, we make the best of advantages of (\ref{R-E15}) rather than (\ref{R-E12}). The high-frequency part is divided into two parts, and on each part, different values of $r$ are chosen to obtain desired decay estimates with the assumption regularity $s_{c}=5/2$, see, \textit{e.g.}, (\ref{R-E35}) and (\ref{R-E37}) for details.

Our main result is stated as follows.
\begin{thm}\label{thm1.3}
Assume that the initial data satisfy $w_{0}-w_{\infty}\in B^{5/2}_{2,1}\cap \dot{B}^{-3/2}_{2,\infty}$ and (\ref{R-E4}). Set $I_{1}:=\|w_{0}-w_{\infty}\|_{B^{5/2}_{2,1}\cap \dot{B}^{-3/2}_{2,\infty}}.$
Then there exists a constant $\delta_{1}>0$ such that if $I_{1}\leq\delta_{1}$, then the classical solution to (\ref{R-E1})-(\ref{R-E3}) admits the optimal decay estimate
\begin{eqnarray}
\|w-w_{\infty}\|_{L^2}\lesssim I_{1}(1+t)^{-3/4}. \label{R-E18}
\end{eqnarray}
\end{thm}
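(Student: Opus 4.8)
The plan is to combine the global-in-time energy bound of Theorem~\ref{thm1.2} with the frequency-localized decay estimate of Corollary~\ref{cor1.1}, applied through a Duhamel representation of the nonlinear system~(\ref{R-E8}). Write $z=z(t)$ for the perturbation, $z_{\mathcal{L}}(t)=\mathcal{G}(t)z_0$ for the linear semigroup governed by~(\ref{R-E9}), and express $z(t)=\mathcal{G}(t)z_0+\int_0^t\mathcal{G}(t-\tau)\big(\sum_j Q_{x_j}(z)+R(z)\big)(\tau)\,d\tau$. The linear part is handled directly: since $w_0-w_\infty\in B^{5/2}_{2,1}\cap\dot B^{-3/2}_{2,\infty}$, applying Corollary~\ref{cor1.1} with $n=3$, $\varrho=3/2$, and the appropriate choice of $s,\ell,r,\alpha$ on the low- and high-frequency pieces gives the decay $\|\mathcal{G}(t)z_0\|_{L^2}\lesssim I_1(1+t)^{-3/4}$; here the high-frequency part is where one exploits $r=1$ on a sub-band and $r=2$ on the remaining band to keep the required regularity at $s_c=5/2$ rather than needing extra derivatives, exactly as flagged around~(\ref{R-E35})--(\ref{R-E37}).

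The core of the argument is a bootstrap/continuity estimate on the decay functional. I would introduce
\[
M(t):=\sup_{0\leq\tau\leq t}(1+\tau)^{3/4}\|z(\tau)\|_{L^2},
\]
and aim to prove $M(t)\lesssim I_1+M(t)^2$, which closes by smallness of $I_1$ and the standard continuity argument (using that $M$ is finite and continuous, itself a consequence of Theorem~\ref{thm1.2}). To bound the Duhamel integral I would estimate $\|\mathcal{G}(t-\tau)F(\tau)\|_{L^2}$ with $F=\sum_j Q_{x_j}(z)+R(z)$ again via Corollary~\ref{cor1.1}, splitting the $\tau$-integral at $t/2$. The nonlinear terms satisfy $Q(z)=O(|(\rho,\upsilon)|^2)$ and $R(z)=O(|\rho||E|+|\upsilon||h|)$, so by product estimates in Besov spaces (Lemma-type bounds referenced in Sect.~\ref{sec:2}) and the uniform energy bound $N_0(\tau)\lesssim I_1$, the relevant norms of $F(\tau)$ in $\dot B^{-3/2}_{2,\infty}$ (controlled through an $L^1$-type bound, using $\|fg\|_{L^1}\le\|f\|_{L^2}\|g\|_{L^2}$ and the embedding $L^1\hookrightarrow\dot B^{-3/2}_{2,\infty}$) and in the high-regularity Besov space are each $O\big((1+\tau)^{-3/2}M(\tau)^2\big)$ — one factor of $(1+\tau)^{-3/4}$ from each quadratic slot. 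Inserting the decay kernel and integrating in $\tau$ then yields the $(1+t)^{-3/4}$ rate with constant $\lesssim M(t)^2$; on $[t/2,t]$ one uses the uniform-in-time energy bound for $F$ in the high-regularity slot while keeping the kernel's short-time integrability, and on $[0,t/2]$ one uses the full decay of the kernel against the decaying nonlinearity.

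The main obstacle I anticipate is the high-frequency bookkeeping: because the dissipative rate $\eta_0(\xi)=|\xi|^2/(1+|\xi|^2)^2$ decays like $|\xi|^{-2}$ at infinity, the high-frequency semigroup is only regularizing in the weak regularity-loss sense, so I must verify that the regularity budget $s_c=5/2$ genuinely suffices in~(\ref{R-E15}) when $\ell$, $r$ are chosen on each dyadic sub-band, and that the nonlinear terms — which contain one spatial derivative in $Q_{x_j}$ — do not overspend that budget. Concretely, one must check that $s+\ell\le 5/2$ with $\ell>3(1/r-1/2)$ is simultaneously solvable with the exponent $-\ell/2+\tfrac32(1/r-1/2)$ still negative enough to be summable against the time integral; choosing $r=1$ on the part where $|\xi|$ is moderate and $r=2$ on the far-field part is what makes this work, and getting those two ranges and their Besov product estimates to dovetail without losing a derivative is the delicate point. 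A secondary technical care point is handling the derivative in $Q_{x_j}$: it is absorbed either by moving it onto the kernel (costing $2^q$, compensated by the extra decay from $\eta_0$ at low frequency and by the regularity surplus at high frequency) or by the $\dot B^{s+\ell}_{r,\alpha}$ norm directly, and one must make sure the chosen splitting is consistent across the low/high decomposition.
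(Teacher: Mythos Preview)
Your overall architecture (time-weighted functional plus Corollary~\ref{cor1.1} on a low/high and $[0,t/2]$/$[t/2,t]$ splitting) matches the paper, but there is a genuine gap in the high-frequency nonlinear estimate on $[t/2,t]$, and the bootstrap inequality you aim for is not the one that actually closes.

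You claim that the high-regularity Besov norm of $F(\tau)$ is $O\big((1+\tau)^{-3/2}M(\tau)^2\big)$, with ``one factor of $(1+\tau)^{-3/4}$ from each quadratic slot''. That is true for the $L^1\hookrightarrow\dot B^{-3/2}_{2,\infty}$ low-frequency norm, but it is \emph{false} for the high-regularity norm needed at high frequency. A product estimate in $\dot B^{5/2}_{1,1}$ (the $r=1$ choice) or $\dot B^{5/2}_{2,1}$ forces one factor to carry the full $5/2$ derivatives, and for that factor you only have the uniform bound $\|z\|_{B^{5/2}_{2,1}}\lesssim I_1$ from Theorem~\ref{thm1.2}, \emph{no} pointwise decay. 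With only one decaying factor you get $\|Q\|_{\dot B^{5/2}_{1,1}}\lesssim (1+\tau)^{-3/4}M(t)I_1$, and since the $r=1,\ \ell=3/2$ kernel in (\ref{R-E15}) contributes \emph{zero} decay at high frequency, the $\tau$-integral over $[t/2,t]$ gives only $(1+t)^{-1/2}M(t)^2I_1^2$, which does not close. Your alternative phrasing (``uniform bound for $F$ and kernel short-time integrability'') also fails: at high frequency the kernel decays like $(1+t-\tau)^{-\ell/2}$ for $r=2$, and integrability on $[t/2,t]$ would need $\ell>2$, i.e.\ $Q\in \dot B^{>3}_{2,2}$, overspending the $5/2$ budget.

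The missing ingredient is the $L^2$-in-time dissipation norm $D(t)=\|(\rho,\upsilon)\|_{L^2_t(B^{5/2}_{2,1})}+\|E\|_{L^2_t(B^{3/2}_{2,1})}+\|\nabla h\|_{L^2_t(B^{1/2}_{2,1})}$, which Theorem~\ref{thm1.2} bounds by $I_1$. On $[t/2,t]$ the paper writes $\|Q\|_{\dot B^{5/2}_{1,1}}\lesssim \|z^\bot\|_{L^2}\|z^\bot\|_{\dot B^{5/2}_{2,1}}$ and then estimates $\int_{t/2}^{t}(1+\tau)^{-3/2}\|z^\bot\|^2_{\dot B^{5/2}_{2,1}}\,d\tau\le \sup_{[t/2,t]}(1+\tau)^{-3/2}\cdot D(t)^2\lesssim (1+t)^{-3/2}I_1^2$; the square-integrability in time of the high-regularity factor is exactly what rescues the estimate. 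The correct closed inequality is therefore $N(t)\lesssim I_1+N(t)D(t)+N(t)^2$ (Lemma~\ref{lem4.1}), not $M(t)\lesssim I_1+M(t)^2$; it closes because $D(t)\lesssim I_1$ is small. You should revise the $[t/2,t]$ high-frequency step to use $D(t)$ rather than either pointwise decay of the high norm or kernel integrability.
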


In Theorem \ref{thm1.3}, the low frequency regularity assumption is posted in $\dot{B}^{-3/2}_{2,\infty}$, which is less restrictive than the usual $L^1$ space, so a direct consequence follows from Lemma \ref{lem2.3} immediately.

\begin{cor}\label{cor1.2}
Assume that the initial data satisfy $w_{0}-w_{\infty}\in B^{5/2}_{2,1}\cap L^1$.
Set $\widetilde{I}_{1}:=\|w_{0}-w_{\infty}\|_{B^{5/2}_{2,1}\cap L^1}.$
Then there exists a constant $\delta_{1}>0$ such that if $\widetilde{I}_{1}\leq\delta_{1}$, then the classical solution to (\ref{R-E1})-(\ref{R-E3}) satisfies the optimal decay estimate (\ref{R-E18}), where $I_{1}$ is replaced by $\widetilde{I}_{1}$.
\end{cor}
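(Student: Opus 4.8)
\noindent\textit{Strategy of proof.}
Corollary \ref{cor1.2} is immediate once Theorem \ref{thm1.3} is available: with $n=3$, Lemma \ref{lem2.3} gives the embedding $L^{1}(\mathbb{R}^{3})\hookrightarrow\dot{B}^{-3/2}_{2,\infty}(\mathbb{R}^{3})$, so $\|w_{0}-w_{\infty}\|_{\dot{B}^{-3/2}_{2,\infty}}\lesssim\|w_{0}-w_{\infty}\|_{L^{1}}$ and hence $I_{1}\lesssim\widetilde{I}_{1}$; shrinking $\delta_{1}$ by the embedding constant and applying Theorem \ref{thm1.3} yields (\ref{R-E18}) with $I_{1}$ replaced by $\widetilde{I}_{1}$. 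So the substance is Theorem \ref{thm1.3}, for which I propose the following. First I would invoke Theorem \ref{thm1.2}: taking $\delta_{1}\le\delta_{0}$, the problem (\ref{R-E1})--(\ref{R-E3}) has a unique global classical solution $w$, the perturbation $z=(\rho,\upsilon,E,h)^{\top}$ solves (\ref{R-E8}) with $Q(z)=O(|(\rho,\upsilon)|^{2})$, $R(z)=O(\rho|E|+|\upsilon||h|)$, the constraints (\ref{R-E10}) propagate for all $t$ (this is what makes the dissipation of $(1,2)$-type, with rate $\eta_{0}$), and the uniform bound $N_{0}(t)+D_{0}(t)\lesssim I_{0}\lesssim\delta_{1}$ holds; in particular one has the ``high-regularity reservoir'' $\|z(t)\|_{B^{5/2}_{2,1}}\lesssim\delta_{1}$ for all $t\ge0$, which is exactly what the high-frequency part of the decay inequality will consume.

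Next I would localize. Applying $\dot{\Delta}_{q}$ to (\ref{R-E8}) --- whose left-hand matrices $A^{0},A^{j},L$ are constant, so no commutators are produced --- and running the Kawashima-type energy method with the compensating function adapted to the dissipative structure of $(A^{0},A^{j},L)$ as in \cite{UK,UWK}, one obtains the Littlewood--Paley pointwise energy estimate (\ref{R-E30}), whence Gronwall gives, for every $q\in\mathbb{Z}$ and $t\ge0$,
\[
\|\dot{\Delta}_{q}z(t)\|_{L^{2}}\lesssim e^{-c_{0}\eta_{0}(2^{q})t}\|\dot{\Delta}_{q}z_{0}\|_{L^{2}}+\int_{0}^{t}e^{-c_{0}\eta_{0}(2^{q})(t-\tau)}\|\dot{\Delta}_{q}F(\tau)\|_{L^{2}}\,d\tau,
\]
where $F(z):=\sum_{j}Q_{x_{j}}+R$ is the source in (\ref{R-E8}) and $\eta_{0}(2^{q})\sim\eta(\xi)\big|_{|\xi|\sim2^{q}}$ with $\eta(\xi)=|\xi|^{2}/(1+|\xi|^{2})^{2}$. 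Multiplying by $2^{qs}$, summing in $l^{1}_{q}$, and applying Corollary \ref{cor1.1} (with $n=3$, $\alpha=1$) to the linear term and, under the time integral, to the source, converts the exponential factors into polynomial time decay. The new ingredient enters here through the choice of parameters: at low frequency I use the $\dot{B}^{-3/2}_{2,\infty}$ regularity ($\varrho=3/2$, $s=0$), which already produces the target rate $(1+t)^{-3/4}$; at high frequency I split into two sub-ranges and use Corollary \ref{cor1.1} with two different values of $r$ --- $r=2$, which exploits the full $B^{5/2}_{2,1}$ reservoir and gives the fast rate $(1+t)^{-5/4}$ for the linear piece, and $r=1$, which exploits the $L^{1}$-type smallness $\|z^{2}\|_{L^{1}}\lesssim\|z\|_{L^{2}}^{2}$ of the quadratic source at the price of the loss $\gamma_{2}(1,2)=3/4$ but with a strictly lower regularity demand on $z$.

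Then I would set up a bootstrap. Introduce $\mathcal{D}(t):=\sup_{0\le\tau\le t}\big[(1+\tau)^{3/4}\|z(\tau)\|_{\dot{B}^{0}_{2,1}}+(\text{weighted intermediate-regularity norms of }z)\big]$, where the time weight carried by $\|z\|_{\dot{B}^{\theta}_{2,1}}$ is the one dictated by Corollary \ref{cor1.1} (for small $\theta$ the low-frequency rate $(1+\tau)^{3/4+\theta/2}$, for larger $\theta$ the high-frequency rate $(1+\tau)^{5/4-\theta/2}$), the top regularity $5/2$ being merely bounded by $\delta_{1}$ from the reservoir; the components $(\rho,\upsilon)$ entering $Q$ should be tracked at higher regularity than $(E,h)$, mirroring the hierarchy in $D_{0}(t)$. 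Feeding the localized estimate above into $\mathcal{D}(t)$, and using the Besov product laws and the embeddings $L^{1}\hookrightarrow\dot{B}^{-3/2}_{2,\infty}$, $\dot{B}^{3/2}_{2,1}\hookrightarrow L^{\infty}$ of Sect.\ref{sec:2}, the reservoir bound, the interpolation $\|z\|_{\dot{B}^{\theta}_{2,1}}\lesssim\|z\|_{\dot{B}^{0}_{2,1}}^{1-2\theta/5}\|z\|_{\dot{B}^{5/2}_{2,1}}^{2\theta/5}$, and the elementary time-convolution inequality $\int_{0}^{t}(1+t-\tau)^{-a}(1+\tau)^{-b}\,d\tau\lesssim(1+t)^{-\min\{a,b\}}$ (valid when $\max\{a,b\}>1$), one should arrive at $\mathcal{D}(t)\lesssim I_{1}+\mathcal{D}(t)^{2}$. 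Since $I_{1}\le\delta_{1}$ is small and $t\mapsto\mathcal{D}(t)$ is continuous, a continuity argument gives $\mathcal{D}(t)\lesssim I_{1}$ for all $t$, in particular $\|w-w_{\infty}\|_{L^{2}}\lesssim\|z\|_{\dot{B}^{0}_{2,1}}\lesssim I_{1}(1+t)^{-3/4}$.

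The hard part is the high-frequency nonlinear closure at the critical index $5/2$: the source $\sum_{j}Q_{x_{j}}+R$ costs one derivative, so with $s=0$ and only $5/2$ derivatives in the reservoir the naive $r=2$ estimate leaves too little decay inside the Duhamel integral, while the naive $r=1$ estimate is hit by the $+3/4$ loss. Reconciling this requires the frequency-dependent choice of $r$ on the two sub-ranges (the content of (\ref{R-E35})--(\ref{R-E37})), keeping the extra derivative on the semigroup rather than on $Q$ wherever that is profitable, and a careful use of interpolation so that decay is fed into the intermediate-regularity norms without exceeding the $B^{5/2}_{2,1}$ reservoir. Verifying that each resulting term decays no slower than $(1+t)^{-3/4}$ is the delicate computational core of the argument.
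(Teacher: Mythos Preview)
Your reduction of Corollary~\ref{cor1.2} to Theorem~\ref{thm1.3} via the embedding $L^{1}(\mathbb{R}^{3})\hookrightarrow\dot{B}^{-3/2}_{2,\infty}(\mathbb{R}^{3})$ of Lemma~\ref{lem2.3} is exactly how the paper proceeds; the corollary has no separate proof beyond that sentence.

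Your sketch of Theorem~\ref{thm1.3}, however, diverges from the paper's actual argument in two respects worth noting. First, the paper tracks only the single weighted quantity $N(t)=\sup_{\tau\le t}(1+\tau)^{3/4}\|z(\tau)\|_{L^{2}}$ --- in $L^{2}=\dot{B}^{0}_{2,2}$, not $\dot{B}^{0}_{2,1}$, and with no auxiliary weighted intermediate-regularity norms --- and closes to $N(t)\lesssim I_{1}+N(t)D(t)+N(t)^{2}$, where $D(t)$ is the time-integrated dissipation norm already bounded by $\|z_{0}\|_{B^{5/2}_{2,1}}$ from (\ref{R-E17}). Second, and more importantly, the ``two sub-ranges'' behind (\ref{R-E35}) and (\ref{R-E37}) are \emph{time} sub-intervals $[0,t/2]$ and $[t/2,t]$, not frequency bands: on $[0,t/2]$ one takes $r=2$ in Corollary~\ref{cor1.1}, pulls out $(1+t-\tau)^{-3/2}\sim(1+t)^{-3/2}$, and bounds $\int_{0}^{t/2}\|Q\|_{\dot{B}^{5/2}_{2,2}}^{2}\,d\tau$ by $\|z\|_{L^{\infty}}^{2}D(t)^{2}$; on $[t/2,t]$ one takes $r=1$, so the kernel contributes no decay, but $\|Q\|_{\dot{B}^{5/2}_{1,1}}\lesssim\|z^{\bot}\|_{L^{2}}\|z^{\bot}\|_{\dot{B}^{5/2}_{2,1}}$ supplies $(1+\tau)^{-3/2}\sim(1+t)^{-3/2}$ from the $N(t)$ weight, and the remaining $\int_{t/2}^{t}\|z^{\bot}\|_{\dot{B}^{5/2}_{2,1}}^{2}\,d\tau$ is again absorbed by $D(t)^{2}$. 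Your phrase ``frequency-dependent choice of $r$'' is a misreading of this mechanism. Your more elaborate bootstrap with multiple weighted Besov norms and interpolation may well close, but it is heavier than what the paper does; the ``hard part'' you flag is handled there precisely by this time-split together with the a~priori bound $D(t)\lesssim\|z_{0}\|_{B^{5/2}_{2,1}}$, not by interpolation among weighted norms.
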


\begin{rem}
In the present paper, we can achieve $s_{D}\leq 5/2$, where the regularity $5/2$ is the same as that
for global solutions in \cite{X,XXK}. This is the first time to show
the optimal decay rate in critical Besov spaces for problem (\ref{R-E1})-(\ref{R-E3}), which allows to reduce the regularity assumption
heavily in comparison with previous works as \cite{D1,TWW,UK} and therein references.
\end{rem}

\begin{rem}
It is worth noting that Theorems \ref{thm1.1}-\ref{thm1.2} encourages us
to investigate the minimal decay regularity for other dissipative systems with regularity-loss as in \cite{Dh,DNK,D2,DS,HK,IHK,IK,LK,SK2}
in the near future.
\end{rem}

Finally, would like to note that all physical parameters are normalized to be one in (\ref{R-E1}). If considered, there are rigorous justifications on the singular-parameter limits for (\ref{R-E1}),  such as the nonrelativistic limit, quasi-neutral limit as well as combined nonrelativistic and
quasi-neutral limits  in  \cite{PW1,PW2,PW3},  diffusive relaxation limits in \cite{XX} and so on.

The rest of this paper unfolds as follows. In Sect.\ref{sec:2}, we
review Littlewood-Paley decomposition, Besov spaces and Chemin-Lerner spaces to make the context possibly self-contained.
 Sect.\ref{sec:3} is devoted to prove the frequency-localization time-decay
inequality. Furthermore, in Sect.\ref{sec:4}, we deduce the optimal decay estimate for (\ref{R-E6})-(\ref{R-E7})
by employing energy approaches in terms of high-frequency and low-frequency decompositions.

\textbf{Notations.}  Throughout the paper, $f\lesssim g$ denotes $f\leq Cg$, where $C>0$
is a generic constant. $f\thickapprox g$ means $f\lesssim g$ and $g\lesssim f$. Denote by $\mathcal{C}([0,T],X)$ (resp.,
$\mathcal{C}^{1}([0,T],X)$) the space of continuous (resp.,
continuously differentiable) functions on $[0,T]$ with values in a
Banach space $X$. Also, $\|(f,g,h)\|_{X}$ means $
\|f\|_{X}+\|g\|_{X}+\|h\|_{X}$, where $f,g,h\in X$.

\section{Preliminary} \setcounter{equation}{0}\label{sec:2}
In this section, we briefly review the Littlewood--Paley
decomposition, Besov spaces and Chemin-Lerner spaces in $\mathbb{R}^{n}(n\geq1)$, see
\cite{BCD} for more details.

Firstly, we give an improved
Bernstein inequality (see \cite{W}), which allows the case of fractional derivatives.

\begin{lem}\label{lem2.1}
Let $0<R_{1}<R_{2}$ and $1\leq a\leq b\leq\infty$.
\begin{itemize}
\item [(i)] If $\mathrm{Supp}\mathcal{F}f\subset \{\xi\in \mathbb{R}^{n}: |\xi|\leq
R_{1}\lambda\}$, then
\begin{eqnarray*}
\|\Lambda^{\alpha}f\|_{L^{b}}
\lesssim \lambda^{\alpha+n(\frac{1}{a}-\frac{1}{b})}\|f\|_{L^{a}}, \ \  \mbox{for any}\ \  \alpha\geq0;
\end{eqnarray*}

\item [(ii)]If $\mathrm{Supp}\mathcal{F}f\subset \{\xi\in \mathbb{R}^{n}:
R_{1}\lambda\leq|\xi|\leq R_{2}\lambda\}$, then
\begin{eqnarray*}
\|\Lambda^{\alpha}f\|_{L^{a}}\approx\lambda^{\alpha}\|f\|_{L^{a}}, \ \  \mbox{for any}\ \ \alpha\in\mathbb{R}.
\end{eqnarray*}
\end{itemize}
\end{lem}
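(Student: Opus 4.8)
The plan is to realize the operator $\Lambda^{\alpha}$, acting on a frequency-localized function, as convolution against an explicit kernel whose $L^{c}$-norm is computed by a scaling argument, and then to close the estimate with Young's convolution inequality. Throughout, $\Lambda^{\alpha}$ has Fourier symbol $|\xi|^{\alpha}$, and the only real subtlety will be the integrability of the kernel at infinity, which is governed by the behaviour of the symbol at the frequency origin.

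For part (i), fix a smooth cut-off $\psi$ with $\psi\equiv1$ on $\{|\xi|\le R_{1}\}$ and $\mathrm{Supp}\,\psi\subset\{|\xi|\le 2R_{1}\}$. Since $\mathrm{Supp}\,\mathcal{F}f\subset\{|\xi|\le R_{1}\lambda\}$, we have $\psi(\xi/\lambda)\widehat{f}=\widehat{f}$, hence $\widehat{\Lambda^{\alpha}f}(\xi)=|\xi|^{\alpha}\psi(\xi/\lambda)\widehat{f}(\xi)$ and $\Lambda^{\alpha}f=K_{\lambda}\ast f$ with $K_{\lambda}=\mathcal{F}^{-1}(|\xi|^{\alpha}\psi(\xi/\lambda))$. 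Writing $H(\zeta):=|\zeta|^{\alpha}\psi(\zeta)$ (independent of $\lambda$) and using the dilation rule for the Fourier transform, one gets $K_{\lambda}(x)=\lambda^{\alpha+n}(\mathcal{F}^{-1}H)(\lambda x)$, so that $\|K_{\lambda}\|_{L^{c}}=\lambda^{\alpha+n(1-1/c)}\|\mathcal{F}^{-1}H\|_{L^{c}}$. Choosing $c$ by $1/c=1-(1/a-1/b)$, which is admissible for $1\le a\le b\le\infty$, Young's inequality $\|K_{\lambda}\ast f\|_{L^{b}}\le\|K_{\lambda}\|_{L^{c}}\|f\|_{L^{a}}$ produces exactly the factor $\lambda^{\alpha+n(1/a-1/b)}$ once we know $\mathcal{F}^{-1}H\in L^{c}$ uniformly in $\lambda$. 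This is where $\alpha\ge0$ enters: $H$ is compactly supported and bounded, so $\mathcal{F}^{-1}H\in L^{\infty}$, while $H$ is $C^{\infty}$ away from the origin, its sole singularity being the homogeneous factor $|\zeta|^{\alpha}$. Because $\alpha\ge0$ this origin singularity is integrable and, by the standard Fourier asymptotics of homogeneous distributions, yields algebraic decay $\mathcal{F}^{-1}H(x)=O(|x|^{-n-\alpha})$ as $|x|\to\infty$, the smooth compactly supported remainder contributing rapidly decaying tails; hence $\mathcal{F}^{-1}H\in L^{1}\cap L^{\infty}\subset L^{c}$ for every $c$, with $\lambda$-independent norm, completing (i).

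For part (ii), replace $\psi$ by a smooth $\widetilde{\psi}$ supported in an annulus $\{R_{1}/2\le|\xi|\le 2R_{2}\}$ and equal to $1$ on $\{R_{1}\le|\xi|\le R_{2}\}$, so that $\widehat{\Lambda^{\alpha}f}=|\xi|^{\alpha}\widetilde{\psi}(\xi/\lambda)\widehat{f}$. Now $G(\zeta):=|\zeta|^{\alpha}\widetilde{\psi}(\zeta)$ is genuinely $C^{\infty}_{c}$, since the annulus excludes the origin and thus $\alpha\in\mathbb{R}$ is unrestricted; consequently $\mathcal{F}^{-1}G$ is Schwartz, in particular in $L^{1}$. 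The same scaling computation gives $\|\mathcal{F}^{-1}(|\xi|^{\alpha}\widetilde{\psi}(\xi/\lambda))\|_{L^{1}}=\lambda^{\alpha}\|\mathcal{F}^{-1}G\|_{L^{1}}$, and Young's inequality with $c=1$, $b=a$ furnishes $\|\Lambda^{\alpha}f\|_{L^{a}}\lesssim\lambda^{\alpha}\|f\|_{L^{a}}$. For the reverse inequality I would observe that $\Lambda^{\alpha}f$ retains the same annular Fourier support, apply the bound just proved to $\Lambda^{\alpha}f$ with exponent $-\alpha$, and use $f=\Lambda^{-\alpha}\Lambda^{\alpha}f$ to get $\lambda^{\alpha}\|f\|_{L^{a}}\lesssim\|\Lambda^{\alpha}f\|_{L^{a}}$; combining the two bounds gives the equivalence $\approx$.

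The main obstacle is precisely the $L^{1}$-integrability of the kernel $\mathcal{F}^{-1}H$ in part (i) when $\alpha$ is a positive non-integer, for then $|\zeta|^{\alpha}\psi(\zeta)$ is not smooth at the frequency origin and one cannot simply invoke that the transform is Schwartz. Quantifying the resulting $O(|x|^{-n-\alpha})$ tail — the statement that upgrades Bernstein's inequality to the fractional-derivative version claimed here — is the delicate point; in part (ii) this difficulty evaporates because the frequency annulus keeps the symbol smooth and permits arbitrary real $\alpha$.
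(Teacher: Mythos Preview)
The paper does not prove Lemma~\ref{lem2.1}; it is stated as a known result with a citation to \cite{W} (and implicitly to \cite{BCD}), so there is no in-paper argument to compare against. Your approach---write $\Lambda^{\alpha}$ on a frequency-localized function as convolution with a rescaled kernel, compute the kernel's $L^{c}$-norm by scaling, and close with Young's inequality---is exactly the standard proof of Bernstein-type inequalities (see e.g.\ \cite{BCD}, Lemma~2.1), and your treatment of part~(ii), including the reverse inequality via $f=\Lambda^{-\alpha}\Lambda^{\alpha}f$, is clean and correct.

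The only point deserving a sharper sentence is the one you yourself flag: the claim $\mathcal{F}^{-1}H(x)=O(|x|^{-n-\alpha})$ for $H(\zeta)=|\zeta|^{\alpha}\psi(\zeta)$ when $\alpha>0$ is not an even integer. Your phrase ``this origin singularity is integrable'' is slightly misleading---the issue is not integrability of $H$ (it is bounded) but its limited smoothness at $\zeta=0$. The decay claim is nonetheless correct, and a self-contained justification is the usual dyadic/integration-by-parts argument: split the oscillatory integral at $|\zeta|\sim 1/|x|$, bound the inner piece trivially by $\int_{|\zeta|\le 1/|x|}|\zeta|^{\alpha}\,d\zeta\sim|x|^{-n-\alpha}$, and on the outer piece integrate by parts $N>n+\alpha$ times, producing $|x|^{-N}\int_{1/|x|<|\zeta|<C}|\zeta|^{\alpha-N}\,d\zeta\sim|x|^{-n-\alpha}$. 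With that, $\mathcal{F}^{-1}H\in L^{\infty}$ (since $H\in L^{1}$) together with the tail bound gives $\mathcal{F}^{-1}H\in L^{1}\cap L^{\infty}$, and your Young-inequality argument goes through.
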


Let ($\varphi, \chi)$ is a
couple of smooth functions valued in [0,1] such that $\varphi$ is
supported in the shell
$\mathcal{C}(0,\frac{3}{4},\frac{8}{3})=\{\xi\in\mathbb{R}^{n}|\frac{3}{4}\leq|\xi|\leq\frac{8}{3}\}$,
$\chi$ is supported in the ball $\mathcal{B}(0,\frac{4}{3})=
\{\xi\in\mathbb{R}^{n}||\xi|\leq\frac{4}{3}\}$ satisfying
$$
\chi(\xi)+\sum_{q\in\mathbb{N}}\varphi(2^{-q}\xi)=1,\ \ \ \ q\in
\mathbb{N},\ \ \xi\in\mathbb{R}^{n}
$$
and
$$
\sum_{k\in\mathbb{Z}}\varphi(2^{-k}\xi)=1,\ \ \ \ k\in \mathbb{Z},\
\ \xi\in\mathbb{R}^{n}\setminus\{0\}.
$$
For $f\in\mathcal{S'}$(the set of temperate distributions
which is the dual of the Schwarz class $\mathcal{S}$),  define
$$
\Delta_{-1}f:=\chi(D)f=\mathcal{F}^{-1}(\chi(\xi)\mathcal{F}f),\
\Delta_{q}f:=0 \ \  \mbox{for}\ \  q\leq-2;
$$
$$
\Delta_{q}f:=\varphi(2^{-q}D)f=\mathcal{F}^{-1}(\varphi(2^{-q}\xi)\mathcal{F}f)\
\  \mbox{for}\ \  q\geq0;
$$
$$
\dot{\Delta}_{q}f:=\varphi(2^{-q}D)f=\mathcal{F}^{-1}(\varphi(2^{-q}\xi)\mathcal{F}f)\
\  \mbox{for}\ \  q\in\mathbb{Z},
$$
where $\mathcal{F}f$, $\mathcal{F}^{-1}f$ represent the Fourier
transform and the inverse Fourier transform on $f$, respectively. Observe that
the operator $\dot{\Delta}_{q}$ coincides with $\Delta_{q}$ for $q\geq0$.

Denote by $\mathcal{S}'_{0}:=\mathcal{S}'/\mathcal{P}$ the tempered distributions modulo polynomials $\mathcal{P}$. Furthermore,
Besov spaces can be characterized by Littlewood-Paley decompositions.

\begin{defn}\label{defn6.1}
For $s\in \mathbb{R}$ and $1\leq p,r\leq\infty,$ the homogeneous
Besov spaces $\dot{B}^{s}_{p,r}$ is defined by
$$\dot{B}^{s}_{p,r}=\{f\in S'_{0}:\|f\|_{\dot{B}^{s}_{p,r}}<\infty\},$$
where
$$\|f\|_{\dot{B}^{s}_{p,r}}
=\left\{\begin{array}{l}\Big(\sum_{q\in\mathbb{Z}}(2^{qs}\|\dot{\Delta}_{q}f\|_{L^p})^{r}\Big)^{1/r},\
\ r<\infty, \\ \sup_{q\in\mathbb{Z}}
2^{qs}\|\dot{\Delta}_{q}f\|_{L^p},\ \ r=\infty.\end{array}\right.
$$\end{defn}

Similarly, one also has the definition of inhomogeneous Besov spaces.
\begin{defn}\label{defn6.2}
For $s\in \mathbb{R}$ and $1\leq p,r\leq\infty,$ the inhomogeneous
Besov spaces $B^{s}_{p,r}$ is defined by
$$B^{s}_{p,r}=\{f\in S':\|f\|_{B^{s}_{p,r}}<\infty\},$$
where
$$\|f\|_{B^{s}_{p,r}}
=\left\{\begin{array}{l}\Big(\sum_{q=-1}^{\infty}(2^{qs}\|\Delta_{q}f\|_{L^p})^{r}\Big)^{1/r},\
\ r<\infty, \\ \sup_{q\geq-1} 2^{qs}\|\Delta_{q}f\|_{L^p},\ \
r=\infty.\end{array}\right.
$$
\end{defn}

Besov spaces obey various inclusion relations. Precisely,
\begin{lem}\label{lem2.2} Let $s\in \mathbb{R}$ and $1\leq
p,r\leq\infty,$ then
\begin{itemize}
\item [(i)]If $s>0$, then $B^{s}_{p,r}=L^{p}\cap \dot{B}^{s}_{p,r};$
\item[(ii)]If $\tilde{s}\leq s$, then $B^{s}_{p,r}\hookrightarrow
B^{\tilde{s}}_{p,r}$. This inclusion relation is false for
the homogeneous Besov spaces;
\item[(iii)]If $1\leq r\leq\tilde{r}\leq\infty$, then $\dot{B}^{s}_{p,r}\hookrightarrow
\dot{B}^{s}_{p,\tilde{r}}$ and $B^{s}_{p,r}\hookrightarrow
B^{s}_{p,\tilde{r}};$
\item[(iv)]If $1\leq p\leq\tilde{p}\leq\infty$, then $\dot{B}^{s}_{p,r}\hookrightarrow \dot{B}^{s-n(\frac{1}{p}-\frac{1}{\tilde{p}})}_{\tilde{p},r}
$ and $B^{s}_{p,r}\hookrightarrow
B^{s-n(\frac{1}{p}-\frac{1}{\tilde{p}})}_{\tilde{p},r}$;
\item[(v)]$\dot{B}^{n/p}_{p,1}\hookrightarrow\mathcal{C}_{0},\ \ B^{n/p}_{p,1}\hookrightarrow\mathcal{C}_{0}(1\leq p<\infty);$
\end{itemize}
where $\mathcal{C}_{0}$ is the space of continuous bounded functions
which decay at infinity.
\end{lem}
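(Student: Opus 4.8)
The plan is to prove each of the five inclusions in Lemma~\ref{lem2.2} using the Littlewood--Paley characterization of Besov norms together with the Bernstein inequalities from Lemma~\ref{lem2.1}. These are standard facts about Besov spaces, so the strategy throughout is to reduce everything to $\ell^r(\mathbb{Z})$ or $\ell^r(\mathbb{N})$ sequence inequalities applied to the dyadic blocks $\dot\Delta_q f$ or $\Delta_q f$.

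For part (i), the claim $B^{s}_{p,r}=L^p\cap\dot B^{s}_{p,r}$ when $s>0$ splits into two inclusions. First I would show $B^{s}_{p,r}\hookrightarrow L^p\cap\dot B^{s}_{p,r}$: the low-frequency block $\Delta_{-1}f=\chi(D)f$ together with the high blocks reconstructs $f$, and since $s>0$ the series $\sum_{q\geq 0}2^{qs}\|\Delta_q f\|_{L^p}$ controls the homogeneous norm while the $L^p$ norm is bounded by $\|\Delta_{-1}f\|_{L^p}+\sum_{q\geq0}\|\Delta_q f\|_{L^p}$, the latter summable because $s>0$. Conversely, given $f\in L^p\cap\dot B^{s}_{p,r}$, the inhomogeneous high blocks coincide with the homogeneous ones for $q\geq0$, and $\|\Delta_{-1}f\|_{L^p}\lesssim\|f\|_{L^p}$, so the inhomogeneous norm is controlled. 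The role of $s>0$ is precisely to make the low-frequency summation converge, which is the only subtle point here.

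Parts (ii) and (iii) are immediate from the sequence-space structure. For (iii), the inclusion $\dot B^{s}_{p,r}\hookrightarrow\dot B^{s}_{p,\tilde r}$ for $r\leq\tilde r$ follows from the elementary embedding $\ell^r\hookrightarrow\ell^{\tilde r}$ applied to the sequence $(2^{qs}\|\dot\Delta_q f\|_{L^p})_q$, and identically in the inhomogeneous case. For (ii), since the inhomogeneous norm sums only over $q\geq-1$ and $2^{q\tilde s}\leq 2^{qs}$ uniformly once $q\geq-1$ and $\tilde s\leq s$ (as $2^q\geq 2^{-1}$ is bounded below), the bound $\|f\|_{B^{\tilde s}_{p,r}}\lesssim\|f\|_{B^{s}_{p,r}}$ follows termwise; I would also note explicitly why this fails homogeneously, namely that over $q\to-\infty$ the factor $2^{q(\tilde s-s)}$ blows up, so no uniform comparison of the two homogeneous norms is possible.

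Part (iv) is the only place requiring genuine analysis, and it is the main obstacle: it is the Besov embedding $\dot B^{s}_{p,r}\hookrightarrow\dot B^{s-n(1/p-1/\tilde p)}_{\tilde p,r}$ for $p\leq\tilde p$. The key step is Bernstein's inequality, Lemma~\ref{lem2.1}(ii) for $\alpha=0$ combined with the gain in integrability: for a block supported in an annulus of size $2^q$ one has $\|\dot\Delta_q f\|_{L^{\tilde p}}\lesssim 2^{qn(1/p-1/\tilde p)}\|\dot\Delta_q f\|_{L^p}$. Multiplying by $2^{q(s-n(1/p-1/\tilde p))}$ gives exactly $2^{qs}\|\dot\Delta_q f\|_{L^p}$, so taking $\ell^r$ norms in $q$ yields the claim with no loss, and the inhomogeneous case is identical (using Lemma~\ref{lem2.1}(i) for the $q=-1$ block). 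Finally, part (v) is the endpoint embedding into continuous bounded functions decaying at infinity: with $s=n/p$ and $r=1$, I would apply (iv) with $\tilde p=\infty$ to get $\dot B^{n/p}_{p,1}\hookrightarrow\dot B^{0}_{\infty,1}$, then observe that $\sum_q\|\dot\Delta_q f\|_{L^\infty}<\infty$ means the series $\sum_q\dot\Delta_q f$ converges uniformly; since each $\dot\Delta_q f$ is smooth (being band-limited) and vanishes at infinity, the uniform limit $f$ lies in $\mathcal C_0$, and likewise in the inhomogeneous case for $1\leq p<\infty$. The delicate point in (iv)/(v) is ensuring the Bernstein estimate is applied with the correct homogeneity exponent and that the $\ell^1$ summability in (v) genuinely delivers uniform convergence and hence continuity.
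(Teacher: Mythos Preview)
Your sketch is correct and follows the standard route for these classical embeddings. The paper itself does not prove Lemma~\ref{lem2.2} at all: it is stated in the preliminaries as a known collection of facts, with the reference \cite{BCD} given for details, so there is no ``paper's own proof'' to compare against. One small quibble: in part~(iv) the gain in integrability for a frequency-localized block is the content of Lemma~\ref{lem2.1}(i) with $\alpha=0$ (the ball case, which of course covers the annulus), not Lemma~\ref{lem2.1}(ii), which only compares norms in the same $L^a$; otherwise your use of Bernstein and the $\ell^r$-sequence reductions is exactly the standard argument found in \cite{BCD}.
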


In the recent work \cite{SS},  Sohinger and Strain first introduced the
Besov space of negative order to investigate the optimal time-decay rate of Boltzmann equation. Here, we
give the simplification as follows.
\begin{lem}\label{lem2.3}
Suppose that $\varrho>0$ and $1\leq p<2$. It holds that
\begin{eqnarray*}
\|f\|_{\dot{B}^{-\varrho}_{r,\infty}}\lesssim \|f\|_{L^{p}}
\end{eqnarray*}
with $1/p-1/r=\varrho/n$. In particular, this holds with $\varrho=n/2, r=2$ and $p=1$.
\end{lem}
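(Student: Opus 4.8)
The plan is to reduce the claim to two elementary facts: the low-level bound $L^p\hookrightarrow\dot{B}^0_{p,\infty}$, and the homogeneous Besov embedding of Lemma~\ref{lem2.2}(iv). Recall that, by Definition~\ref{defn6.1}, $\|f\|_{\dot{B}^{-\varrho}_{r,\infty}}=\sup_{q\in\mathbb{Z}}2^{-q\varrho}\|\dot{\Delta}_q f\|_{L^r}$. Since $\varrho/n=1/p-1/r>0$ we have $p<r$, so Lemma~\ref{lem2.2}(iv), applied with regularity index $0$, fine index $\infty$, and the integrability exponent raised from $p$ to $r$, yields $\dot{B}^0_{p,\infty}\hookrightarrow\dot{B}^{-n(1/p-1/r)}_{r,\infty}=\dot{B}^{-\varrho}_{r,\infty}$, because $n(1/p-1/r)=\varrho$. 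Hence it suffices to establish the single inequality $\|f\|_{\dot{B}^0_{p,\infty}}\lesssim\|f\|_{L^p}$.

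For that, write $\dot{\Delta}_q f=h_q\ast f$ with $h_q:=\mathcal{F}^{-1}(\varphi(2^{-q}\cdot))$. A change of variables gives $\|h_q\|_{L^1}=\|\mathcal{F}^{-1}\varphi\|_{L^1}=:C_0$, a constant independent of $q$, so Young's convolution inequality gives $\|\dot{\Delta}_q f\|_{L^p}\le C_0\|f\|_{L^p}$ for every $q\in\mathbb{Z}$; taking the supremum over $q$ yields $\|f\|_{\dot{B}^0_{p,\infty}}\le C_0\|f\|_{L^p}$. Chaining this with the embedding above gives $\|f\|_{\dot{B}^{-\varrho}_{r,\infty}}\lesssim\|f\|_{\dot{B}^0_{p,\infty}}\lesssim\|f\|_{L^p}$, which is the assertion, and the particular case follows by taking $\varrho=n/2$, $r=2$, $p=1$, for which $1/p-1/r=1/2=\varrho/n$. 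Alternatively, one can bypass Lemma~\ref{lem2.2}(iv): since $\mathcal{F}(\dot{\Delta}_q f)$ is supported in the annulus $\{\frac{3}{4}2^q\le|\xi|\le\frac{8}{3}2^q\}$, Bernstein's inequality (Lemma~\ref{lem2.1}) with exponents $p\le r$ gives $\|\dot{\Delta}_q f\|_{L^r}\lesssim 2^{qn(1/p-1/r)}\|\dot{\Delta}_q f\|_{L^p}\lesssim 2^{q\varrho}\|f\|_{L^p}$ using the uniform bound just obtained; multiplying by $2^{-q\varrho}$ the power of $2^q$ cancels exactly, and taking $\sup_{q\in\mathbb{Z}}$ gives the claim with no issue at either end $q\to\pm\infty$.

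The argument is essentially routine, so I do not anticipate a genuine obstacle; the only point deserving a line of care is the interpretation of the left-hand side, namely that for $1\le p<2$ an $L^p$ function is identified with an element of $\mathcal{S}'_0=\mathcal{S}'/\mathcal{P}$ (legitimate because $L^p\cap\mathcal{P}=\{0\}$ for $p<\infty$), so that the homogeneous norm $\|f\|_{\dot{B}^{-\varrho}_{r,\infty}}$ is well defined and the supremum over all dyadic scales makes sense.
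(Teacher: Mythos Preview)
Your proof is correct. The paper does not actually supply a proof of Lemma~\ref{lem2.3}; it merely states the result as a simplification of an embedding from Sohinger--Strain \cite{SS}. Your argument---$L^p\hookrightarrow\dot{B}^0_{p,\infty}$ via Young's inequality, followed by the Besov embedding of Lemma~\ref{lem2.2}(iv) (or, equivalently, a direct application of Bernstein's inequality, Lemma~\ref{lem2.1})---is the standard route and fills the gap cleanly.
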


Usually, Moser-type product estimates play an important role in the estimate of bilinear
terms.
\begin{prop}\label{prop2.1}
Let $s>0$ and $1\leq
p,r\leq\infty$. Then $\dot{B}^{s}_{p,r}\cap L^{\infty}$ is an algebra and
$$
\|fg\|_{\dot{B}^{s}_{p,r}}\lesssim \|f\|_{L^{\infty}}\|g\|_{\dot{B}^{s}_{p,r}}+\|g\|_{L^{\infty}}\|f\|_{\dot{B}^{s}_{p,r}}.
$$
Let $s_{1},s_{2}\leq n/p$ such that $s_{1}+s_{2}>n\max\{0,\frac{2}{p}-1\}. $  Then one has
$$\|fg\|_{\dot{B}^{s_{1}+s_{2}-n/p}_{p,1}}\lesssim \|f\|_{\dot{B}^{s_{1}}_{p,1}}\|g\|_{\dot{B}^{s_{2}}_{p,1}}.$$
\end{prop}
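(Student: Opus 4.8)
The plan is to prove both inequalities simultaneously via Bony's paraproduct decomposition. Writing $S_{q-1}:=\sum_{q'\le q-2}\dot\Delta_{q'}$ for the low-frequency cut-off and $\widetilde{\dot\Delta}_q:=\sum_{|q'-q|\le1}\dot\Delta_{q'}$, I decompose
\[
fg=T_fg+T_gf+R(f,g),
\]
where $T_fg:=\sum_q S_{q-1}f\,\dot\Delta_q g$, $T_gf:=\sum_q S_{q-1}g\,\dot\Delta_q f$ are the paraproducts and $R(f,g):=\sum_q\dot\Delta_q f\,\widetilde{\dot\Delta}_q g$ is the remainder. The guiding principle is a spectral asymmetry: after applying $\dot\Delta_j$, each paraproduct is localized in an annulus $\{|\xi|\sim 2^j\}$, so only a bounded band $|q-j|\le N_0$ contributes; whereas $\dot\Delta_j R(f,g)$ receives contributions from the whole range $q\ge j-N_0$, because $\dot\Delta_q f\,\widetilde{\dot\Delta}_q g$ is spectrally supported in a ball of radius $\lesssim 2^q$ (not an annulus). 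This is exactly why the sign conditions on $s$ (resp. $s_1+s_2$) will be consumed only by the remainder.

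For the algebra estimate I would bound $T_fg$ first: Hölder gives $\|S_{q-1}f\,\dot\Delta_qg\|_{L^p}\le\|S_{q-1}f\|_{L^\infty}\|\dot\Delta_qg\|_{L^p}\lesssim\|f\|_{L^\infty}\|\dot\Delta_qg\|_{L^p}$, and after applying $\dot\Delta_j$, summing over the finite band $|q-j|\le N_0$, and taking the $\ell^r_j$ norm I obtain $\|T_fg\|_{\dot B^s_{p,r}}\lesssim\|f\|_{L^\infty}\|g\|_{\dot B^s_{p,r}}$; the term $T_gf$ is symmetric. For the remainder, Lemma \ref{lem2.1}(i) yields $\|\dot\Delta_j R(f,g)\|_{L^p}\lesssim\sum_{q\ge j-N_0}\|\dot\Delta_qf\|_{L^p}\|\widetilde{\dot\Delta}_qg\|_{L^\infty}$; bounding the last factor by $\|g\|_{L^\infty}$, multiplying by $2^{js}$, and writing the result as a discrete convolution of the $\ell^r$ sequence $(2^{qs}\|\dot\Delta_qf\|_{L^p})_q$ against the one-sided sequence $(2^{(j-q)s}\mathbf 1_{q\ge j-N_0})$, I invoke $s>0$ so that the latter lies in $\ell^1$ and Young's inequality closes the estimate. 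Adding the three contributions gives the inequality; combined with the trivial bound $\|fg\|_{L^\infty}\le\|f\|_{L^\infty}\|g\|_{L^\infty}$, this shows $\dot B^s_{p,r}\cap L^\infty$ is an algebra.

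For the product law set $s:=s_1+s_2-n/p$. The paraproduct $T_fg$ now requires controlling $\|S_{q-1}f\|_{L^\infty}$: by Lemma \ref{lem2.1}(i), $\|S_{q-1}f\|_{L^\infty}\lesssim\sum_{q'\le q-2}2^{q'n/p}\|\dot\Delta_{q'}f\|_{L^p}\lesssim 2^{q(n/p-s_1)}\|f\|_{\dot B^{s_1}_{p,1}}$ whenever $s_1<n/p$, the geometric sum converging precisely because $n/p-s_1>0$ (the borderline $s_1=n/p$ is absorbed by $\dot B^{n/p}_{p,1}\hookrightarrow L^\infty$ from Lemma \ref{lem2.2}(v), reducing it to the algebra case). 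Since $2^{js}2^{q(n/p-s_1)}\sim 2^{qs_2}$ on the band $|q-j|\le N_0$, one gets $2^{js}\|\dot\Delta_jT_fg\|_{L^p}\lesssim\|f\|_{\dot B^{s_1}_{p,1}}\sum_{|q-j|\le N_0}2^{qs_2}\|\dot\Delta_qg\|_{L^p}$, and summing in $\ell^1_j$ gives $\|T_fg\|_{\dot B^s_{p,1}}\lesssim\|f\|_{\dot B^{s_1}_{p,1}}\|g\|_{\dot B^{s_2}_{p,1}}$; $T_gf$ is symmetric, using $s_2\le n/p$. For the remainder I would split on $p$. When $p\ge2$, Hölder and Lemma \ref{lem2.1}(i) place $R(f,g)$ into $\dot B^{s_1+s_2}_{p/2,1}$ (the summation being legitimate because $s_1+s_2>0=n\max\{0,2/p-1\}$), and the embedding of Lemma \ref{lem2.2}(iv) descends this to $\dot B^{s_1+s_2-n/p}_{p,1}=\dot B^s_{p,1}$. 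When $1\le p<2$, where $p/2<1$ makes that target space meaningless, I would instead first raise the integrability of both factors to $2$ by $\dot B^{s_i}_{p,1}\hookrightarrow\dot B^{s_i-n(1/p-1/2)}_{2,1}$ (Lemma \ref{lem2.2}(iv)), apply the remainder estimate in the $L^2$-based scale, which is valid exactly when $(s_1-n(1/p-1/2))+(s_2-n(1/p-1/2))>0$, i.e. $s_1+s_2>n(2/p-1)$, and then embed the resulting $\dot B^{\,\cdot}_{1,1}$ bound back down to $\dot B^s_{p,1}$ by Lemma \ref{lem2.2}(iv) once more; a direct index count shows the final regularity is indeed $s$.

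The main obstacle is the remainder term in the product law. The two paraproducts and the entire algebra estimate are routine once the spectral localizations and the band/ball dichotomy are tracked. The remainder, however, forces a genuine case split on $p$: the clean route through $\dot B^{s_1+s_2}_{p/2,1}$ followed by Besov embedding is only licit for $p\ge2$, and it is precisely the range $1\le p<2$ (where $p/2<1$) that makes the threshold $s_1+s_2>n\max\{0,2/p-1\}$ sharp and requires the embedding-into-$L^2$ detour; I would also dispatch the borderline indices $s_i=n/p$ separately through $\dot B^{n/p}_{p,1}\hookrightarrow L^\infty$.
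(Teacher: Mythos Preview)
The paper does not supply a proof of Proposition~\ref{prop2.1}; it is listed among the preliminary facts in Section~\ref{sec:2} with the blanket reference to \cite{BCD}, so there is no in-paper argument to compare against. Your proposal is correct and is precisely the standard paraproduct proof one finds in \cite{BCD}: Bony's decomposition $fg=T_fg+T_gf+R(f,g)$, the finite-band localization for the paraproducts, and the one-sided convolution plus Young's inequality for the remainder (with the $\ell^1$ summability of the geometric weight consuming the positivity hypothesis on $s$, respectively on $s_1+s_2-n\max\{0,2/p-1\}$). Your handling of the product law --- Bernstein on $S_{q-1}f$ under $s_1<n/p$, the borderline $s_1=n/p$ via the embedding of Lemma~\ref{lem2.2}(v), and the case split on $p$ for the remainder (the $\dot B^{s_1+s_2}_{p/2,1}$ route when $p\ge2$, the $L^2$-detour when $p<2$) --- is exactly how the sharp threshold $s_1+s_2>n\max\{0,2/p-1\}$ is obtained in the literature, and your index count for the $p<2$ detour is correct.
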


In the analysis of decay estimates, we use the general form of Moser-type product estimates, which was shown by \cite{Z}.
\begin{prop}\label{prop2.2}
Let $s>0$ and $1\leq p,r,p_{1},p_{2},p_{3},p_{4}\leq\infty$. Assume
that $f\in L^{p_{1}}\cap \dot{B}^{s}_{p_{4},r}$ and $g\in L^{p_{3}}\cap
\dot{B}^{s}_{p_{2},r}$ with
$$\frac{1}{p}=\frac{1}{p_{1}}+\frac{1}{p_{2}}=\frac{1}{p_{3}}+\frac{1}{p_{4}}.$$
Then it holds that
\begin{eqnarray*}
\|fg\|_{\dot{B}^{s}_{p,r}}\lesssim \|f\|_{L^{p_{1}}}\|g\|_{\dot{B}^{s}_{p_{2},r}}+\|g\|_{L^{p_{3}}}\|f\|_{\dot{B}^{s}_{p_{4},r}}.
\end{eqnarray*}
\end{prop}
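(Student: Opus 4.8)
The plan is to run Bony's paraproduct decomposition and estimate the three resulting pieces separately, matching each to one of the two terms on the right-hand side. Writing $\dot{S}_{k}f=\sum_{j\le k-1}\dot\Delta_{j}f$ for the homogeneous low-frequency cutoff and decomposing
\[
fg=T_{f}g+T_{g}f+R(f,g),\qquad T_{f}g=\sum_{k}\dot{S}_{k-1}f\,\dot\Delta_{k}g,\quad R(f,g)=\sum_{k}\dot\Delta_{k}f\,\widetilde{\dot\Delta}_{k}g,
\]
where $\widetilde{\dot\Delta}_{k}:=\dot\Delta_{k-1}+\dot\Delta_{k}+\dot\Delta_{k+1}$, I would bound $T_{f}g$ by the first term $\|f\|_{L^{p_{1}}}\|g\|_{\dot B^{s}_{p_{2},r}}$, bound $T_{g}f$ by the second term $\|g\|_{L^{p_{3}}}\|f\|_{\dot B^{s}_{p_{4},r}}$, and throw the remainder $R$ onto the second term (the pairing for $R$ is flexible, so either target term would do). The hypotheses $1/p=1/p_{1}+1/p_{2}=1/p_{3}+1/p_{4}$ are precisely what allow the two Hölder pairings used in the two paraproducts.

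For the paraproduct $T_{f}g$, spectral localization shows $\dot{S}_{k-1}f\,\dot\Delta_{k}g$ has Fourier support in an annulus $\{|\xi|\sim 2^{k}\}$, so $\dot\Delta_{j}(T_{f}g)$ receives contributions only from $k$ with $|j-k|\le N$ for a fixed integer $N$. Applying $\dot\Delta_{j}$, then Hölder's inequality with $1/p=1/p_{1}+1/p_{2}$, together with the uniform bound $\|\dot{S}_{k-1}f\|_{L^{p_{1}}}\lesssim\|f\|_{L^{p_{1}}}$ (Young's inequality for the $L^{1}$-normalized convolution kernel of $\dot{S}_{k-1}$), gives
\[
2^{js}\|\dot\Delta_{j}(T_{f}g)\|_{L^{p}}\lesssim \|f\|_{L^{p_{1}}}\sum_{|j-k|\le N}2^{js}\|\dot\Delta_{k}g\|_{L^{p_{2}}}.
\]
Since $2^{js}\approx 2^{ks}$ on the relevant support and the inner sum has boundedly many terms, taking the $\ell^{r}_{j}$ norm and invoking Young's convolution inequality for sequences yields $\|T_{f}g\|_{\dot B^{s}_{p,r}}\lesssim\|f\|_{L^{p_{1}}}\|g\|_{\dot B^{s}_{p_{2},r}}$. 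The symmetric argument, now pairing $g$ in $L^{p_{3}}$ with $f$ in $\dot B^{s}_{p_{4},r}$ through $1/p=1/p_{3}+1/p_{4}$, gives $\|T_{g}f\|_{\dot B^{s}_{p,r}}\lesssim\|g\|_{L^{p_{3}}}\|f\|_{\dot B^{s}_{p_{4},r}}$. No restriction on $s$ enters here, because the two factors have comparable frequencies.

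The remainder $R(f,g)$ is where the hypothesis $s>0$ becomes essential, and I expect this to be the main obstacle. Now $\dot\Delta_{k}f\,\widetilde{\dot\Delta}_{k}g$ is only localized in a ball $\{|\xi|\lesssim 2^{k}\}$, so $\dot\Delta_{j}R$ receives contributions from every $k\ge j-N_{0}$ --- the high-high-to-low interaction. Estimating $\|\dot\Delta_{j}R\|_{L^{p}}\lesssim\sum_{k\ge j-N_{0}}\|\dot\Delta_{k}f\|_{L^{p_{4}}}\|\widetilde{\dot\Delta}_{k}g\|_{L^{p_{3}}}$ by Hölder with $1/p=1/p_{3}+1/p_{4}$, using $\|\widetilde{\dot\Delta}_{k}g\|_{L^{p_{3}}}\lesssim\|g\|_{L^{p_{3}}}$, and inserting the weight gives
\[
2^{js}\|\dot\Delta_{j}R\|_{L^{p}}\lesssim\|g\|_{L^{p_{3}}}\sum_{k\ge j-N_{0}}2^{(j-k)s}\,c_{k},\qquad c_{k}:=2^{ks}\|\dot\Delta_{k}f\|_{L^{p_{4}}},
\]
where $\|c_{k}\|_{\ell^{r}_{k}}=\|f\|_{\dot B^{s}_{p_{4},r}}$. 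The right-hand side is the convolution of $(c_{k})$ with the sequence $a_{m}=2^{ms}\mathbf{1}_{\{m\le N_{0}\}}$; precisely because $s>0$ the geometric tail $\sum_{m\le N_{0}}2^{ms}$ converges, so $(a_{m})\in\ell^{1}$ and Young's inequality gives $\|R(f,g)\|_{\dot B^{s}_{p,r}}\lesssim\|g\|_{L^{p_{3}}}\|f\|_{\dot B^{s}_{p_{4},r}}$. Collecting the three estimates proves the proposition. The single delicate point is the convergence of this $k$-sum: if $s\le 0$ the sequence $(a_{m})$ would cease to be summable and the high-high interaction would not close, which is exactly why the assumption $s>0$ is imposed.
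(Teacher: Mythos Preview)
The paper does not supply its own proof of this proposition; it is quoted from the literature (reference \cite{Z} in the bibliography). Your Bony paraproduct argument is the standard route to such Moser-type estimates and is correct: the two paraproducts $T_{f}g$ and $T_{g}f$ are handled by the two H\"older pairings furnished by $1/p=1/p_{1}+1/p_{2}=1/p_{3}+1/p_{4}$, and the remainder closes via the $\ell^{1}$-summability of $(2^{ms}\mathbf{1}_{m\le N_{0}})_{m}$, which is exactly where $s>0$ is needed.
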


On the other hand, we need space-time mixed Besov spaces initiated by J.-Y. Chemin and N. Lerner
in \cite{CL}, which can be regarded as the refinement of the usual space-time mixed spaces
$L^{\theta}_{T}(\dot{B}^{s}_{p,r})$ or $L^{\theta}_{T}(B^{s}_{p,r})$.
\begin{defn}\label{defn6.3}
For $T>0, s\in\mathbb{R}, 1\leq r,\theta\leq\infty$, the homogeneous
Chemin-Lerner spaces
$\widetilde{L}^{\theta}_{T}(\dot{B}^{s}_{p,r})$ is defined by
$$\widetilde{L}^{\theta}_{T}(\dot{B}^{s}_{p,r}):
=\{f\in
L^{\theta}(0,T;\mathcal{S}'_{0}):\|f\|_{\widetilde{L}^{\theta}_{T}(\dot{B}^{s}_{p,r})}<+\infty\},$$
where
$$\|f\|_{\widetilde{L}^{\theta}_{T}(\dot{B}^{s}_{p,r})}:=\Big(\sum_{q\in\mathbb{Z}}(2^{qs}\|\dot{\Delta}_{q}f\|_{L^{\theta}_{T}(L^{p})})^{r}\Big)^{\frac{1}{r}}$$
with the usual convention if $r=\infty$.
\end{defn}

\begin{defn}\label{defn6.4}
For $T>0, s\in\mathbb{R}, 1\leq r,\theta\leq\infty$, the
inhomogeneous Chemin-Lerner spaces
$\widetilde{L}^{\theta}_{T}(B^{s}_{p,r})$ is defined by
$$\widetilde{L}^{\theta}_{T}(B^{s}_{p,r}):
=\{f\in
L^{\theta}(0,T;\mathcal{S}'):\|f\|_{\widetilde{L}^{\theta}_{T}(B^{s}_{p,r})}<+\infty\},$$
where
$$\|f\|_{\widetilde{L}^{\theta}_{T}(B^{s}_{p,r})}:=\Big(\sum_{q\geq-1}(2^{qs}\|\Delta_{q}f\|_{L^{\theta}_{T}(L^{p})})^{r}\Big)^{\frac{1}{r}}$$
with the usual convention if $r=\infty$.
\end{defn}

We further define
$$\widetilde{\mathcal{C}}_{T}(B^{s}_{p,r}):=\widetilde{L}^{\infty}_{T}(B^{s}_{p,r})\cap\mathcal{C}([0,T],B^{s}_{p,r})
$$ and $$\widetilde{\mathcal{C}}^1_{T}(B^{s}_{p,r}):=\{f\in\mathcal{C}^1([0,T],B^{s}_{p,r})|\partial_{t}f\in\widetilde{L}^{\infty}_{T}(B^{s}_{p,r})\},$$
where the index $T$ will be omitted when $T=+\infty$.

By Minkowski's inequality, Chemin-Lerner spaces can be linked with
$L^{\theta}_{T}(X)$ with $X=B^{s}_{p,r}$ or $\dot{B}^{s}_{p,r}$.
\begin{rem}\label{rem6.1}
It holds that
$$\|f\|_{\widetilde{L}^{\theta}_{T}(X)}\leq\|f\|_{L^{\theta}_{T}(X)}\,\,\,
\mbox{if}\,\, r\geq\theta;\ \ \ \
\|f\|_{\widetilde{L}^{\theta}_{T}(X)}\geq\|f\|_{L^{\theta}_{T}(X)}\,\,\,
\mbox{if}\,\, r\leq\theta.
$$\end{rem}

\section{The proof of Theorem \ref{thm1.1}} \setcounter{equation}{0}\label{sec:3}
In the recent decade, harmonic analysis tools, especially for techniques based on Littlewood-Paley decomposition and paradifferential calculus
have proved to be very efficient in the study of partial differential equations, see for example, \cite{BCD}. It is well-known that the frequency-localization operator $\dot{\Delta}_{q}f$ (or $\Delta_{q}f$ ) has a smoothing effect on the function $f$, even though $f$ is quite rough. Moreover, the $L^p$-norm of $\dot{\Delta}_{q}f$ can be preserved if $f\in L^p(\mathbb{R}^{n})$. However, so far there are few efforts on the time-decay property related to the block operator, so
Theorem \ref{thm1.1} seems to be a suitable candidate for the motivation, which enables us to overcome the outstanding difficulty of regularity-loss
in Besov spaces with relatively lower regularity.

\begin{proof}
Indeed, we proceed the proof for the inequality (\ref{R-E14}) with the aid of Littlewood-Paley frequency-localization techniques. It follows from the assumption (\ref{R-E13}) that there exist constants $c_{0}>0$ and $R_{0}>0$ such that
\begin{eqnarray}
&&\|\widehat{\dot{\Delta}_{q}f}e^{-\eta(\xi)t}\|_{L^{2}}\nonumber\\
&\leq& \|\widehat{\dot{\Delta}_{q}f}e^{-c_{0}|\xi|^{\sigma_{1}}t}\|_{L^{2}(|\xi|\leq R_{0})}+
\|\widehat{\dot{\Delta}_{q}f}e^{-c_{0}|\xi|^{-\sigma_{2}}t}\|_{L^{2}(|\xi|\geq R_{0})}.
 \label{R-E19}
\end{eqnarray}
We set $R_{0}=2^{q_{0}}(q_{0}\in \mathbb{Z})$ without the loss of generality.

(1) If $q\geq q_{0}$, then $|\xi|\sim2^{q}\geq R_{0}$, which leads to
\begin{eqnarray}
&&\|\widehat{\dot{\Delta}_{q}f}e^{-c_{0}|\xi|^{-\sigma_{2}}t}\|_{L^{2}(|\xi|\geq R_{0})}
\nonumber\\&=&
\Big\||\xi|^{\ell}|\widehat{\dot{\Delta}_{q}f}|\frac{e^{-c_{0}t|\xi|^{-\sigma_{2}}}}{|\xi|^{\ell}}\Big\|_{L^2(|\xi|\geq R_{0})}\nonumber\\&\leq&
\||\xi|^{\ell}\widehat{\dot{\Delta}_{q}f}\|_{L^{r'}}\Big\|\frac{e^{-c_{0}t|\xi|^{-\sigma_2}}}{|\xi|^{\ell}}\Big\|_{L^{m}(|\xi|\geq R_{0})}
 \ \ \Big(\frac{1}{r'}+\frac{1}{m}=\frac{1}{2},\ r'\geq2 \Big)\nonumber\\&\leq&
 2^{q\ell}\|\dot{\Delta}_{q}f\|_{L^{r}}\Big\|\frac{e^{-c_{0}t|\xi|^{-\sigma_2}}}{|\xi|^{\ell}}\Big\|_{L^{m}(|\xi|\geq R_{0})}\ \  \Big(\frac{1}{r}+\frac{1}{r'}=1\Big), \label{R-E20}
\end{eqnarray}
where the Hausdorff-Young's inequality was used in the last line. By performing the change of variable, we can arrive at
\begin{eqnarray}
\Big\|\frac{e^{-c_{0}t|\xi|^{-\sigma_2}}}{|\xi|^{\ell}}\Big\|_{L^{m}(|\xi|\geq R_{0})}\lesssim (1+t)^{-\frac{\ell}{\sigma_2}+\frac{n}{\sigma_2}(\frac{1}{r}-\frac{1}{2})} \label{R-E21}
\end{eqnarray}
for $\ell>n(\frac{1}{r}-\frac{1}{2})$. Besides, it can be also bounded by $(1+t)^{-\frac{\ell}{\sigma_2}}$
for $\ell\geq0$ if $r=2$.
Then it follows from (\ref{R-E20})-(\ref{R-E21})  that
\begin{eqnarray}
2^{qs}\|\widehat{\dot{\Delta}_{q}f}e^{-c_{0}|\xi|^{-\sigma_{2}}t}\|_{L^{2}}\lesssim 2^{q(s+\ell)}(1+t)^{-\frac{\ell}{\sigma_2}+\frac{n}{\sigma_2}(\frac{1}{r}-\frac{1}{2})}\|\dot{\Delta}_{q}f\|_{L^{r}}. \label{R-E22}
\end{eqnarray}

(2) If $q<q_{0}$, then $|\xi|\sim2^{q}\leq R_{0}$, which implies that
\begin{eqnarray}
\|\widehat{\dot{\Delta}_{q}f}e^{-c_{0}|\xi|^{\sigma_{1}}t}\|_{L^{2}(|\xi|\leq R_{0})}
\lesssim \|\widehat{\dot{\Delta}_{q}f}\|_{L^{2}}e^{-c_{0}(2^{q}\sqrt[\sigma_{1}]{t})^{\sigma_{1}}}. \label{R-E23}
\end{eqnarray}
 Furthermore, we can obtain
\begin{eqnarray}
2^{qs}\|\widehat{\dot{\Delta}_{q}f}e^{-c_{0}|\xi|^{\sigma_{1}}t}\|_{L^{2}}
\lesssim\|f\|_{\dot{B}^{-\varrho}_{2,\infty}}(1+t)^{-\frac{s+\varrho}{\sigma_{1}}}[(2^{q}\sqrt[\sigma_{1}]{t})^{s+\varrho}e^{-c_{0}(2^{q}\sqrt[\sigma_{1}]{t})^{\sigma_{1}}}] \label{R-E24}
\end{eqnarray}
for $s\in \mathbb{R}, \varrho\in \mathbb{R}$ such that $s+\varrho>0$. Note that
\begin{eqnarray}
\Big\|(2^{q}\sqrt[\sigma_{1}]{t})^{s+\varrho}e^{-c_{0}(2^{q}\sqrt[\sigma_{1}]{t})^{\sigma_{1}}}\Big\|_{l^{\alpha}_{q}}\lesssim 1, \label{R-E25}
\end{eqnarray}
for any $\alpha\in [1,+\infty]$. Combining (\ref{R-E22}), (\ref{R-E24})-(\ref{R-E25}), we conclude that
\begin{eqnarray}
&&\Big\|2^{qs}\|\widehat{\dot{\Delta}_{q}f}e^{-\eta(\xi)t}\|_{L^{2}}\Big\|_{l^{\alpha}_{q}}\nonumber\\ &\lesssim & \|f\|_{\dot{B}^{-\varrho}_{2,\infty}}(1+t)^{-\frac{s+\varrho}{\sigma_{1}}}
+\|f\|_{\dot{B}^{s+\ell}_{r,\alpha}}(1+t)^{-\frac{\ell}{\sigma_2}+\frac{n}{\sigma_2}(\frac{1}{r}-\frac{1}{2})}, \label{R-E26}
\end{eqnarray}
which is just the inequality (\ref{R-E14}).
\end{proof}

\section{The proof of Theorem \ref{thm1.2}} \setcounter{equation}{0}\label{sec:4}
Due to the dissipative mechanism of regularity-loss, extra higher regularity is usually needed to obtain the optimal decay rate for (\ref{R-E1})-(\ref{R-E3}).
To achieve the minimal decay regularity $s_{D}\leq5/2$,  we shall skip the usual
semigroup approach as in \cite{UK,XK2}. Consequently, the nonlinear energy estimate in Fourier spaces for (\ref{R-E6})-(\ref{R-E7}) is performed. We would like to mention that similar estimates were first given by the second author in \cite{Ka2} for the Boltzmann equation, then well developed in \cite{KY2} for hyperbolic systems of balance laws. In the following, our decay analysis focus on (\ref{R-E6})-(\ref{R-E7}). As a matter of fact, as Theorem \ref{thm1.2}, we can obtain a similar global existence for the solution $z$ to (\ref{R-E6})-(\ref{R-E7}).
For simplification, allow us to abuse the notations $N_{0}(t)$ and $D_{0}(t)$ a little, which means that the corresponding functional norms with respect to $z$  is still labelled as $N_{0}(t)$ and $D_{0}(t)$. Then it follows that these norms can be bounded by
$\|z_0\|_{B^{5/2}_{2,1}}$.

Define
\begin{equation}
N(t)=\sup_{0\leq\tau \leq t}(1+\tau)^{\frac{3}{4}}\|z(\tau)\|_{L^{2}},\nonumber
\end{equation}
\begin{equation}
D(t)=\|(\rho,\upsilon)\|_{L^{2}_{t}(B^{5/2}_{2,1})}+\|E\|_{L^{2}_{t}(B^{3/2}_{2,1})}+
\|\nabla h\|_{L^{2}_{t}(B^{1/2}_{2,1})}.\nonumber
\end{equation}
The optimal decay estimate lies in a nonlinear time-weighted energy inequality, which is included in the following
\begin{lem}\label{lem4.1}
Let $z=(\rho,\upsilon,E,h)^{\top}$ be the global classical solutions, which is similar to that in Theorem \ref{thm1.2}. Additionally, if $z_{0}\in\dot{B}^{-3/2}_{2,\infty}$, then it holds that
\begin{eqnarray}
N(t)\lesssim \|z_{0}\|_{B^{5/2}_{2,1}\cap \dot{B}^{-3/2}_{2,\infty}}+N(t)D(t)+N(t)^2. \label{R-E27}
\end{eqnarray}
\end{lem}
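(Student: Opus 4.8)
The plan is to derive the time-weighted estimate \eqref{R-E27} by combining a Duhamel (mild) representation for \eqref{R-E8} with the frequency-localized decay inequality of Corollary \ref{cor1.1}, rather than the classical semigroup bound \eqref{R-E12}. First I would write the solution $z$ to \eqref{R-E8} via Duhamel's formula against the linear propagator $e^{t\mathcal{L}}$ of \eqref{R-E9}: schematically $z(t)=e^{t\mathcal{L}}z_0+\int_0^t e^{(t-\tau)\mathcal{L}}\big(\sum_j Q_{x_j}+R\big)(\tau)\,d\tau$. Applying $\dot\Delta_q$, taking $L^2$ norms, and invoking the pointwise bound \eqref{R-E11} $|\widehat{z_{\mathcal{L}}}(t,\xi)|\lesssim e^{-c_0\eta_0(\xi)t}|\hat z_0|$ with $\eta_0(\xi)=|\xi|^2/(1+|\xi|^2)^2$, I reduce each piece to the quantity $\big\|2^{qs}\|\widehat{\dot\Delta_q g}\,e^{-c_0\eta_0(\xi)t}\|_{L^2}\big\|_{l^\alpha_q}$ controlled in Corollary \ref{cor1.1}. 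Summing the Littlewood--Paley pieces then yields an $L^2$ (indeed $B^0_{2,1}$) bound on $z(t)$ in terms of the low-frequency norm $\dot B^{-3/2}_{2,\infty}$ and a high-frequency regularity norm of the data and of the nonlinearities.

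The core of the argument is the estimate of the Duhamel integral. I would split the high-frequency contribution into two ranges of $q$ (as the excerpt signals near \eqref{R-E35}, \eqref{R-E37}), choosing $r=1$ on one range and $r=2$ on the other in Corollary \ref{cor1.1}: the $r=2$ choice is what permits the regularity index to stay at $s_c=5/2$ with no loss, while the $r=1$ choice captures the $(1+t)^{-3/4}$ rate where the data is measured in $L^1\hookrightarrow\dot B^{-3/2}_{2,\infty}$ (Lemma \ref{lem2.3}). For the nonlinear source terms I would use the structure $Q(z)=O(|(\rho,\upsilon)|^2)$, $R(z)=O(\rho|E|+|\upsilon||h|)$ together with the Moser-type product estimates of Propositions \ref{prop2.1}--\ref{prop2.2}. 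These bound $\|Q(\tau)\|$ and $\|R(\tau)\|$ in the relevant Besov norms by products of $\|z(\tau)\|_{L^2}$-type quantities and the dissipation norms in $D(t)$; after inserting the definitions of $N(t)$ and $D(t)$ and the a priori energy bound $N_0(t)+D_0(t)\lesssim \|z_0\|_{B^{5/2}_{2,1}}$ from Theorem \ref{thm1.2}, the time integrals produce exactly the quadratic terms $N(t)D(t)$ and $N(t)^2$ on the right of \eqref{R-E27}.

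The delicate bookkeeping — and the step I expect to be the main obstacle — is the time-integral convergence in the Duhamel term at high frequencies. One must check that, with $\ell$ taken as large as the regularity $5/2$ permits (and with the $\gamma_{\sigma_2}(r,2)$ correction of Corollary \ref{cor1.1}), the exponent $-\ell/2+\tfrac{n}{2}(1/r-1/2)$ beats the singularity coming from the product-estimate loss on $\tau\mapsto(\text{nonlinear norm})(\tau)$ near $\tau=t$, so that $\int_0^t(1+t-\tau)^{-\kappa}(1+\tau)^{-\mu}\,d\tau\lesssim (1+t)^{-3/4}$. This is where the two-range split and the two values of $r$ are essential: neither choice alone closes the estimate with only $s_c=5/2$ regularity. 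A secondary technical point is handling the low-frequency part of the Duhamel term, where the nonlinearities must be controlled in $\dot B^{-3/2}_{2,\infty}$ (or an $L^1$-type norm) uniformly in time; here the quadratic null-type structure of $Q$ and $R$ combined with the embedding in Lemma \ref{lem2.3} and a product rule in negative-index Besov spaces gives the needed uniform bound. Once all pieces are collected, absorbing the linear data contributions into $\|z_0\|_{B^{5/2}_{2,1}\cap\dot B^{-3/2}_{2,\infty}}$ and taking the supremum in $t$ against the weight $(1+t)^{3/4}$ delivers \eqref{R-E27}.
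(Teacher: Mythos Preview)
Your overall strategy is close to the paper's, but there is one genuine misreading that would derail the closure of the estimate. You describe the crucial high-frequency step as ``split the high-frequency contribution into two ranges of $q$, choosing $r=1$ on one range and $r=2$ on the other.'' That is not what happens at \eqref{R-E35} and \eqref{R-E37}: the frequency cut is a single one at $q_0$ (low vs.\ high), and the \emph{further} split on the high-frequency piece is in the \emph{time} variable, $\int_0^{t/2}+\int_{t/2}^t$. On $[0,t/2]$ one takes $r=2$, $\ell=3/2$ in Corollary~\ref{cor1.1}, pulls out $(1+t-\tau)^{-3/2}\sim(1+t)^{-3/2}$, and controls the remaining $\int_0^{t/2}\|Q\|^2_{\dot B^{5/2}_{2,2}}\,d\tau$ by $\|z\|_{L^\infty}^2\cdot D(t)^2$ via Proposition~\ref{prop2.1}. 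On $[t/2,t]$ one takes $r=1$, $\ell=3/2$ (so the high-frequency factor is $(1+t-\tau)^0=1$), estimates $\|Q\|_{\dot B^{5/2}_{1,2}}\lesssim\|z^\perp\|_{L^2}\|z^\perp\|_{\dot B^{5/2}_{2,1}}$ via Proposition~\ref{prop2.2}, and extracts $(1+\tau)^{-3/2}\sim(1+t)^{-3/2}$ from $N(t)^2$, leaving again $\int_{t/2}^t\|(\rho,\upsilon)\|^2_{B^{5/2}_{2,1}}\,d\tau\leq D(t)^2$. The same time split is used for $R$.

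The reason a $q$-split cannot substitute for this is exactly the point you flag as delicate: you do \emph{not} have pointwise-in-$\tau$ decay of $\|(\rho,\upsilon)(\tau)\|_{B^{5/2}_{2,1}}$, so no convolution bound of the form $\int_0^t(1+t-\tau)^{-\kappa}(1+\tau)^{-\mu}d\tau$ is available for the high-regularity factor. What \emph{is} available from Theorem~\ref{thm1.2} (via \eqref{R-E36}) is the $L^2_t$ boundedness $D(t)\lesssim\|z_0\|_{B^{5/2}_{2,1}}$, and the time split is precisely what lets you spend one factor on $\sup$-in-time decay and the other on $\int D^2$. As a secondary remark, the paper avoids Duhamel and instead derives the localized Gronwall inequality \eqref{R-E29}--\eqref{R-E30} directly from an energy identity in Fourier variables; your Duhamel route would also work here and leads to the same integrand, so that difference is cosmetic. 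The substantive fix is to replace your frequency sub-split by the $[0,t/2]\cup[t/2,t]$ time split with $r=2$ and $r=1$ respectively.
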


\begin{proof}
It follows from the nonlinear energy method in \cite{XMK} that
\begin{eqnarray}
\frac{d}{dt}\mathcal{E}[\hat{z}]+c_{1}\eta_{0}(\xi)\mathcal{E}[\hat{z}]\lesssim (|\xi|^2|\hat{Q}|^2+|\hat{R}|^2), \label{R-E28}
\end{eqnarray}
for $c_{1}>0$, where $\eta_{0}(\xi)=|\xi|^2/(1+|\xi|^2)^2$ and $\mathcal{E}[\hat{z}]\approx |\hat{z}|^2$. As a matter of fact,
the corresponding Littlewood-Paley pointwise energy estimate is also available according to the derivation of (\ref{R-E28}):
\begin{eqnarray}
\frac{d}{dt}\mathcal{E}[\widehat{\dot{\Delta}_{q}z}]+c_{1}\eta_{0}(\xi)\mathcal{E}[\widehat{\dot{\Delta}_{q}z}]\lesssim (|\xi|^2|\widehat{\dot{\Delta}_{q}Q}|^2+|\widehat{\dot{\Delta}_{q}R}|^2), \label{R-E29}
\end{eqnarray}
where $\mathcal{E}[\widehat{\dot{\Delta}_{q}z}]\approx |\widehat{\dot{\Delta}_{q}z}|^2$. The standard Gronwall's inequality implies that
\begin{eqnarray}
|\widehat{\dot{\Delta}_{q}z}|^2\lesssim e^{-c_{1}\eta_{0}(\xi)t}|\widehat{\dot{\Delta}_{q}z_{0}}|^2+\int_{0}^{t}e^{-c_{1}\eta_{0}(\xi)(t-\tau)}\Big(|\xi|^2|\widehat{\dot{\Delta}_{q}Q}|^2+|\widehat{\dot{\Delta}_{q}R}|^2\Big)d\tau.
\label{R-E30}
\end{eqnarray}
It follows from Fubini and Plancherel theorems that
\begin{eqnarray}
\|z\|^{2}_{L^{2}}&=&\sum_{q\in \mathbb{Z}}\|\dot{\Delta}_{q}z\|^{2}_{L^{2}}
\nonumber\\&\lesssim& \sum_{q\in \mathbb{Z}} \|\widehat{\dot{\Delta}_{q}z_{0}}e^{-\frac{1}{2}c_{1}\eta_{0}(\xi)t}\|^{2}_{L^{2}}
\nonumber\\&&+\int^{t}_{0} \sum_{q\in \mathbb{Z}}\Big(\||\xi|\widehat{\dot{\Delta}_{q}Q}e^{-\frac{1}{2}c_{1}\eta_{0}(\xi)(t-\tau)}\|^2_{L^{2}}+\|\widehat{\dot{\Delta}_{q}R}e^{-\frac{1}{2}c_{1}\eta_{0}(\xi)(t-\tau)}\|^2_{L^{2}}\Big)d\tau
\nonumber\\&& \triangleq J_{1}+J_{2}+J_{3}.\label{R-E31}
\end{eqnarray}
For $J_{1}$, by taking $r=\alpha=2, s=0, \varrho=3/2$ and $\ell=2$ in Corollary \ref{cor1.1}, we arrive at
\begin{eqnarray}
J_{1}&=&\Big(\sum_{q<q_{0}}+\sum_{q\geq q_{0}}\Big)\Big(\cdot\cdot\cdot\Big)\nonumber
\\&\lesssim& \|z_{0}\|^{2}_{\dot{B}^{-3/2}_{2,\infty}}(1+t)^{-\frac{3}{2}}+\sum_{q\geq q_{0}}2^{2q}\|\dot{\Delta}_{q}z_{0}\|^{2}_{L^{2}}(1+t)^{-2}\nonumber
\\&\lesssim&\|z_{0}\|^{2}_{\dot{B}^{-3/2}_{2,\infty}}(1+t)^{-\frac{3}{2}}+\|z_{0}\|^2_{\dot{B}^{2}_{2,2}}(1+t)^{-2}\nonumber
\\&\lesssim& \|z_{0}\|^{2}_{\dot{B}^{-3/2}_{2,\infty}\cap B^{5/2}_{2,1}}(1+t)^{-\frac{3}{2}}, \label{R-E32}
\end{eqnarray}
where we have used the embedding relation in Lemma \ref{lem2.2}.
Next, we begin to bound nonlinear terms on the right-hand side of (\ref{R-E31}). For $J_{2}$, it can be written as the sum of low-frequency and high-frequency:
\begin{eqnarray}
J_{2}=\int^{t}_{0}\Big(\sum_{q<q_{0}}+\sum_{q\geq q_{0}}\Big)\Big(\cdot\cdot\cdot\Big)\triangleq J_{2L}+J_{2H}. \label{R-E33}
\end{eqnarray}
For $J_{2L}$, by taking $\alpha=2, s=1, \varrho=3/2$ in Corollary \ref{cor1.1}, we have
\begin{eqnarray}
J_{2L}&\leq&\int^{t}_{0}(1+t-\tau)^{-\frac{5}{2}}\|Q(\tau)\|^{2}_{\dot{B}^{-3/2}_{2,\infty}}d\tau\nonumber\\&
\lesssim&\int^{t}_{0}(1+t-\tau)^{-\frac{5}{2}}\|Q(\tau)\|^{2}_{L^{1}}d\tau
\nonumber\\&
\lesssim& \int^{t}_{0}(1+t-\tau)^{-\frac{5}{2}}\|z^{\bot}(\tau)\|^4_{L^{2}}d\tau
\nonumber\\&\lesssim& N^{4}(t)\int^{t}_{0}(1+t-\tau)^{-\frac{5}{2}}(1+\tau)^{-3}d\tau
\nonumber\\&\lesssim& N^{4}(t)(1+t)^{-\frac{5}{2}}, \label{R-E34}
\end{eqnarray}
where we have used the embedding $L^1(\mathbb{R})\hookrightarrow \dot{B}^{-3/2}_{2,\infty}(\mathbb{R}^3)$ in Lemma \ref{lem2.3} and
the fact $Q(z)=O(|z^{\bot}|^2)$ with $z^{\bot}:=(\rho,\upsilon)$.

For the high-frequency part $J_{2H}$, we need more elaborate decay analysis to achieve the aim of $s_{D}\leq5/2$.
To do this, we write
\begin{eqnarray}
J_{2H}=\Big(\int^{t/2}_{0}+\int^{t}_{t/2}\Big)\Big(\cdot\cdot\cdot\Big)\triangleq J_{2H1}+J_{2H2}.\nonumber
\end{eqnarray}
For $J_{2H1}$, taking $r=\alpha=2, s=1$ and $\ell=3/2$ in Corollary \ref{cor1.1} gives
\begin{eqnarray}
J_{2H1}&&=\int^{t/2}_{0}\sum_{q\geq q_{0}}\||\xi|\widehat{\dot{\Delta}_{q}Q}e^{-\frac{1}{2}c_{1}\eta_{0}(\xi)(t-\tau)}\|^2_{L^{2}}d\tau\nonumber
\\&&\leq\int_{0}^{t/2}(1+t-\tau)^{-\frac{3}{2}}\|Q(\tau)\|^2_{\dot{B}_{2,2}^{5/2}}d\tau\nonumber
\\&&\leq\int_{0}^{t/2}(1+t-\tau)^{-\frac{3}{2}}\|z^{\bot}\|^{2}_{L^{\infty}}\|z^{\bot}\|^{2}_{B_{2,1}^{5/2}}d\tau\nonumber
\\&&\leq\sup_{0\leq\tau\leq t/2}\{(1+t-\tau)^{-\frac{3}{2}}\|z\|^{2}_{L^{\infty}}\}\int_{0}^{t/2}\|(\rho,\upsilon)\|^{2}_{B^{5/2}_{2,1}}d\tau\nonumber
\\&&\lesssim(1+t)^{-\frac{3}{2}}\|z_{0}\|^{2}_{B^{5/2}_{2,1}}D^{2}(t)\nonumber
\\&&\lesssim(1+t)^{-\frac{3}{2}}\|z_{0}\|^{4}_{B^{5/2}_{2,1}}, \label{R-E35}
\end{eqnarray}
where we have used Proposition \ref{prop2.1} and Lemma \ref{lem2.2}. Additionally, we would like to explain a little for the last step of (\ref{R-E35}) . It follows from  Remark \ref{rem6.1} that
\begin{eqnarray}
D(t)\lesssim D_{0}(t) \lesssim \|z_{0}\|_{B^{5/2}_{2,1}},\label{R-E36}
\end{eqnarray}
where the energy inequality (\ref{R-E17}) in Theorem \ref{thm1.2} was used.

By choosing $\alpha=2, r=s=1$ and $\ell=3/2$ in Corollary \ref{cor1.1}, $I_{2H2}$ is proceeded as
\begin{eqnarray}
J_{2H2}&=&\int^{t}_{t/2}\sum_{q\geq q_{0}}\||\xi|\widehat{\dot{\Delta}_{q}Q}e^{-\frac{1}{2}c_{1}\eta_{0}(\xi)(t-\tau)}\|^2_{L^{2}}d\tau\nonumber
\\&\leq&\int^{t}_{t/2}\sum_{q\geq q_{0}}2^{2q(\frac{3}{2}+1)}\|\dot{\Delta}_{q}Q\|^{2}_{L^{1}}(1+t-\tau)^{-\frac{3}{2}+3(1-\frac{1}{2})}d\tau\nonumber
\\&=&\int^{t}_{t/2}\|Q(\tau)\|^{2}_{\dot{B}^{5/2}_{1,2}}d\tau. \label{R-E37}
\end{eqnarray}
Recalling $Q(z)=O(|z^{\bot}|^2)$, it follows from Proposition \ref{prop2.2} that
\begin{eqnarray}
\|Q(z)\|_{\dot{B}^{5/2}_{1,2}}\leq\|Q(z)\|_{\dot{B}^{5/2}_{1,1}}\lesssim\|z^{\bot}\|_{L^{2}}\|z^{\bot}\|_{\dot{B}^{5/2}_{2,1}}. \label{R-E38}
\end{eqnarray}
Together with (\ref{R-E37})-(\ref{R-E38}), we are led to
\begin{eqnarray}
J_{2H2}&\leq&\int^{t}_{t/2}\|z^{\bot}(\tau)\|^2_{L^{2}}\|z^{\bot}(\tau)\|^2_{\dot{B}^{5/2}_{2,1}}d\tau\nonumber
\\&\lesssim& N(t)^{2}\int^{t}_{t/2}(1+\tau)^{-\frac{3}{2}}\|z^{\bot}(\tau)\|^2_{\dot{B}^{5/2}_{2,1}}d\tau\nonumber
\\&\lesssim& N(t)^{2}\sup_{t/2\leq\tau\leq t}\Big\{(1+\tau)^{-\frac{3}{2}}\Big\}\int^{t}_{t/2}\|(\rho,\upsilon)\|^{2}_{B^{5/2}_{2,1}}d\tau\nonumber
\\&\lesssim& (1+t)^{-\frac{3}{2}}N(t)^{2}D(t)^{2}. \label{R-E40}
\end{eqnarray}
Hence, combine (\ref{R-E35}) and (\ref{R-E40}) to get
\begin{eqnarray}
J_{2H}\lesssim(1+t)^{-\frac{3}{2}}\|z_{0}\|^{4}_{B^{5/2}_{2,1}}+(1+t)^{-\frac{3}{2}}N(t)^{2}D(t)^{2}. \label{R-E41}
\end{eqnarray}
Furthermore, it follows from (\ref{R-E34}) and (\ref{R-E41}) that
\begin{eqnarray}
J_{2}\lesssim(1+t)^{-\frac{3}{2}}\|z_{0}\|^{4}_{B^{5/2}_{2,1}}+(1+t)^{-\frac{3}{2}}N(t)^{2}D(t)^{2}+N(t)^{4}(1+t)^{-\frac{5}{2}}. \label{R-E42}
\end{eqnarray}

For $J_{3}$, we can perform the similar decay estimates as $J_{2}$. Firstly, we write
\begin{eqnarray}
J_{3}=\int^{t}_{0}\Big(\sum_{q<q_{0}}+\sum_{q\geq q_{0}}\Big)\Big(\cdot\cdot\cdot\Big)\triangleq J_{3L}+J_{3H}. \label{R-E43}
\end{eqnarray}
Note that $R(z)=O(\rho |E|+|\upsilon||h|)$, by taking $\alpha=2, s=0$ and $ \varrho=3/2$ in Corollary \ref{cor1.1},
we obtain
\begin{eqnarray}
J_{3L}&=&\int^{t}_{0}(1+t-\tau)^{-\frac{3}{2}}\|R(\tau)\|^{2}_{\dot{B}^{-3/2}_{2,\infty}}d\tau\nonumber
\\&\leq&\int^{t}_{0}(1+t-\tau)^{-\frac{3}{2}}\|R(\tau)\|^{2}_{L^{1}}d\tau\nonumber
\\&\leq&\int^{t}_{0}(1+t-\tau)^{-\frac{3}{2}}\|z(\tau)\|^{4}_{L^{2}}d\tau\nonumber
\\&\lesssim& N(t)^{4}\int^{t}_{0}(1+t-\tau)^{-\frac{3}{2}}(1+\tau)^{-3}d\tau\nonumber
\\&=&N(t)^{4}(1+t)^{-\frac{3}{2}}, \label{R-E44}
\end{eqnarray}
where Lemma \ref{lem2.3} was well used.
For the high-frequency part, we separate it into two parts
\begin{eqnarray}
J_{3H}=\Big(\int^{t/2}_{0}+\int^{t}_{t/2}\Big)\Big(\cdot\cdot\cdot\Big)\triangleq J_{3H1}+J_{3H2}.\nonumber
\end{eqnarray}
For $J_{3H1}$, taking $r=\alpha=2, s=0$ and $\ell=3/2$ in Corollary \ref{cor1.1} gives
\begin{eqnarray}
J_{3H1}&\lesssim &\int^{t/2}_{0}(1+t-\tau)^{-\frac{3}{2}}\|R(\tau)\|^{2}_{\dot{B}^{3/2}_{2,2}}d\tau. \label{R-E441}
\end{eqnarray}
It follows from Lemma \ref{lem2.1} and Proposition \ref{prop2.1} that
\begin{eqnarray}
\|R\|_{\dot{B}^{3/2}_{2,2}}&\leq&\|R\|_{\dot{B}^{3/2}_{2,1}}\nonumber
\\&\lesssim &\|(\varrho,\upsilon)\|_{L^\infty}\Big(\|E\|_{\dot{B}^{3/2}_{2,1}}+\|\nabla h\|_{\dot{B}^{1/2}_{2,1}}\Big)\nonumber
\\&& +\|(h,E)\|_{L^\infty}\|(\varrho,\upsilon)\|_{\dot{B}^{3/2}_{2,1}}\nonumber
\\&\lesssim& \|z\|_{L^\infty}\Big(\|(\varrho,\upsilon,E)\|_{\dot{B}^{3/2}_{2,1}}+\|\nabla h\|_{\dot{B}^{1/2}_{2,1}}\Big). \label{R-E442}
\end{eqnarray}
Therefore, substituting (\ref{R-E442}) into (\ref{R-E441}) gives
\begin{eqnarray}
J_{3H1}&\lesssim&\int^{t/2}_{0}(1+t-\tau)^{-\frac{3}{2}}\|z(\tau)\|^{2}_{L^{\infty}}\Big(\|(\varrho,\upsilon,E)\|_{\dot{B}^{3/2}_{2,1}}+\|\nabla h\|_{\dot{B}^{1/2}_{2,1}}\Big)^2d\tau\nonumber
\\&\lesssim&\sup_{0\leq\tau\leq t/2}\Big\{(1+t-\tau)^{-\frac{3}{2}}\|z(\tau)\|^{2}_{L^{\infty}}\Big\}\nonumber\\ &&\times \int_{0}^{t/2}\Big(\|(\varrho,\upsilon)\|^2_{B^{5/2}_{2,1}}+\|E\|^2_{B^{3/2}_{2,1}}+\|\nabla h\|^2_{B^{1/2}_{2,1}}\Big)d\tau\nonumber
\\&\lesssim&(1+t)^{-\frac{3}{2}}\|z_{0}\|^{2}_{B^{5/2}_{2,1}}D(t)^{2}\nonumber
\\&\lesssim&(1+t)^{-\frac{3}{2}}\|z_{0}\|^{2}_{B^{5/2}_{2,1}}, \label{R-E45}
\end{eqnarray}
where we have used Lemma \ref{lem2.2} and  (\ref{R-E36}).

On the other hand, by taking $\alpha=2, r=1, s=0$ and $\ell=3/2$ in Corollary \ref{cor1.1}, we arrive at
\begin{eqnarray}
J_{3H2}&=&\int^{t}_{t/2}\sum_{q\geq q_{0}}2^{3q}\|\dot{\Delta}_{q}R(\tau)\|^{2}_{L^{1}}d\tau\nonumber
\\&\lesssim&\int^{t}_{t/2}\|R(\tau)\|^{2}_{\dot{B}^{3/2}_{1,2}}d\tau, \label{R-E46}
\end{eqnarray}
where Lemma \ref{lem2.1}, Lemma \ref{lem2.2}  and Proposition \ref{prop2.2} enable us to obtain
\begin{eqnarray}
\|R\|_{\dot{B}^{3/2}_{1,2}}&\leq&\|R\|_{\dot{B}^{3/2}_{1,1}}\nonumber
\\&\lesssim &\|(\varrho,\upsilon)\|_{L^2}\Big(\|E\|_{\dot{B}^{3/2}_{2,1}}+\|\nabla h\|_{\dot{B}^{1/2}_{2,1}}\Big)\nonumber
\\&& +\|(h,E)\|_{L^2}\|(\varrho,\upsilon)\|_{\dot{B}^{3/2}_{2,1}}\nonumber
\\&\lesssim& \|z\|_{L^2}\Big(\|(\varrho,\upsilon,E)\|_{\dot{B}^{3/2}_{2,1}}+\|\nabla h\|_{\dot{B}^{1/2}_{2,1}}). \label{R-E47}
\end{eqnarray}
Together with (\ref{R-E46})-(\ref{R-E47}), we are led to
\begin{eqnarray}
J_{3H2}&\lesssim&\int_{t/2}^{t}\|z(\tau)\|^2_{L^{2}}\Big(\|(\varrho,\upsilon,E)\|_{\dot{B}^{3/2}_{2,1}}+\|\nabla h\|_{\dot{B}^{1/2}_{2,1}}\Big)^2d\tau\nonumber
\\&\lesssim& N(t)^{2}\int_{t/2}^{t}(1+\tau)^{-\frac{3}{2}}\Big(\|(\varrho,\upsilon)\|^2_{B^{5/2}_{2,1}}+\|E\|^2_{B^{3/2}_{2,1}}+\|\nabla h\|^2_{B^{1/2}_{2,1}}\Big)d\tau\nonumber
\\&\lesssim& N(t)^{2}\sup_{t/2\leq\tau\leq t}\Big\{(1+\tau)^{-\frac{3}{2}}\Big\}D(t)^{2}\nonumber
\\&\lesssim& (1+t)^{-\frac{3}{2}}N(t)^{2}D(t)^{2}. \label{R-E48}
\end{eqnarray}
Then, it follows from inequalities (\ref{R-E44}), (\ref{R-E45})  and (\ref{R-E48}) that
\begin{eqnarray}
J_{3}\lesssim(1+t)^{-\frac{3}{2}}\|z_{0}\|^{2}_{B^{5/2}_{2,1}}+(1+t)^{-\frac{3}{2}}N(t)^{2}D(t)^{2}+(1+t)^{-3}N(t)^{4}. \label{R-E49}
\end{eqnarray}
Finally, combine (\ref{R-E32}), (\ref{R-E42}) and (\ref{R-E49}) to obtain
\begin{eqnarray}
\|z\|^{2}_{L^{2}}&\lesssim&(1+t)^{-\frac{3}{2}}\|z_{0}\|^{2}_{B^{5/2}_{2,1}\cap\dot{B}^{-\frac{3}{2}}_{2,\infty}}
\nonumber
\\&&+(1+t)^{-\frac{3}{2}}N(t)^{2}D(t)^{2}+(1+t)^{-3}N(t)^{4}, \label{R-E50}
\end{eqnarray}
where we have used the smallness of $\|z_{0}\|_{B^{5/2}_{2,1}}$. This leads to the desired inequality (\ref{R-E27}).
\end{proof}

\noindent \textbf{Proof of Theorem \ref{thm1.3}.}
Note that (\ref{R-E36}), we arrive at
\begin{eqnarray}
D(t)\lesssim\|z_{0}\|_{B^{5/2}_{2,1}}\lesssim I_{1}. \label{R-E52}
\end{eqnarray}
Then the inequality (\ref{R-E27}) can be solved as $N(t)\lesssim I_{1}$ by the standard argument, provided that $I_{1}$ is sufficiently small.
Consequently, the desired decay estimate
\begin{eqnarray}
\|w-w_{\infty}\|_{L^{2}}\lesssim I_{1}(1+t)^{-\frac{3}{4}} \label{R-E53}
\end{eqnarray}
follows immediately. Hence, the proof of Theorem \ref{thm1.3} is complete. \hspace{10mm} $\square$

\section*{Acknowledgments}
J. Xu is partially supported by the National
Natural Science Foundation of China (11471158), the Program for New Century Excellent
Talents in University (NCET-13-0857) and the Fundamental Research Funds for the Central
Universities (NE2015005). The work is also partially supported by
Grant-in-Aid for Scientific Researches (S) 25220702.


\begin{thebibliography}{99}
\bibitem {BCD}
H.~Bahouri, J.~Y.~Chemin and R.~Danchin. \textit{Fourier Analysis
and Nonlinear Partial Differential Equations}, Grundlehren der
Mathematischen Wissenschaften, Berlin: Springer-Verlag, 2011.

\bibitem {BHN}
S. Bianchini, B. Hanouzet, R. Natalini, Asymptotic
behavior of smooth solutions for partially dissipative hyperoblic
systems with a convex entropy, \textit{Comm. Pure Appl. Math.}
{\bf{60}}, 1559--1622 (2007).

\bibitem {CJW} G.~Q.~Chen, J.~W.~Jermore and D.~H.~Wang, Compressible
Euler-Maxwell equations, \textit{Transp. Theory  Statist. Phys.},
{\bf{29}}, 311--331 (2000).

\bibitem {CL} J.-Y. Chemin and N. Lerner, Flot de champs de vecteurs non lipschitziens et \'{e}quations de Navier-Stokes,
\textit{J. Differential Equations}, {\bf{121}} 314--328 (1995).

\bibitem {Dh}
P. M. N. Dharmawardane, Global solutions and decay property of regularity-loss type for
quasi-linear hyperbolic systems with dissipation, \textit{J. Hyper. Differ. Equ.} {\bf{10}}, 37--76 (2013).

\bibitem {DNK}
P. M. N. Dharmawardane, T. Nakamura and S. Kawashima, Decay estimates of solutions
for quasi-linear hyperbolic systems of viscoelasticity, \textit{SIAM J. Math. Anal.} {\bf{44}}, 1976--2001
(2012).

\bibitem {D1}
R.-J. Duan, Global smooth flows for the compressible Eluer-Maxwell system: Relaxation case,
\textit{J. Hyper. Differ. Equ.} {\bf{8}}, 375--413 (2011).

\bibitem {D2}
R. J. Duan, Dissipative property of the Vlasov-Maxwell-Boltzmann system with a uniform
ionic background, \textit{SIAM J. Math. Anal.} {\bf{43}}, 2732--2757 (2011).

\bibitem {DLZ}
R.-J. Duan, Q. Q. Liu, and C. J. Zhu,
The Cauchy problem on the compressible two-fluids
Euler-Maxwell equations,  \textit{SIAM J. Math. Anal.} {\bf{44}}, 102--133 (2012).

\bibitem {DS}
R.-J. Duan and R. M. Strain,
Optimal large-Time behavior of the Vlasov-Maxwell-Boltzmann system
in the whole space, \textit{Commun. Pure Appl. Math.} {\bf{LXIV}}, 1497--1546 (2011).

\bibitem {GIP}
Y.Guo, A.D.Ionescu and B. Pausader, Global solutions of the Euler-Maxwell two-fluid system in 3D, Preprint (2013).

\bibitem {GM}
P. Germain, N. Masmoudi, Global existence for the Euler-Maxwell system, Preprint (2011).

\bibitem{HN}
B. Hanouzet and R. Natalini, Global existence of smooth
solutions for partially disipative hyperbolic systems with a convex
entropy. \textit{Arch. Rational Mech. Anal.} {\bf{169}}, 89--117 (2003).

\bibitem{HK}
T. Hosono and S. Kawashima, Decay property of regularity-loss type and application
to some nonlinear hyperbolic-elliptic system, \textit{Math. Models Methods Appl. Sci.} {\bf{16}}, 1839--1859
(2006).

\bibitem{IHK}
K. Ide, K. Haramoto and S. Kawashima, Decay property of regularity-loss type for
dissipative Timoshenko system, \textit{Math. Models Meth. Appl. Sci.}, {\bf{18}}, 647--667 (2008).

\bibitem{IK}
K. Ide and S. Kawashima, Decay property of regularity-loss type and nonlinear effects for dissipative
Timoshenko system, \textit{Math. Models Meth. Appl. Sci.} {\bf{18}}, 1001--1025 (2008).

\bibitem{J}
J. W. Jerome, The Cauchy problem for compressible hydrodynamic-Maxwell systems:
A local theory for smooth solutions, Differ. Integral Equs. {\bf{16}}, 1345--1368 (2003).

\bibitem{LK}
Y. Liu and S. Kawashima, Global existence and asymptotic decay of solutions to the nonlinear Timoshenko system with memory,
\textit{Nonlinear Anal.} {\bf{84}}, 1--17 (2013).


\bibitem{Ka1}
S. Kawashima, \textit{Systems of a hyperbolic-parabolic composite type, with applications to the equations of magnetohydrodynamics},
Doctoral Thesis, Kyoto University, 1984. http://repository.kulib.kyoto-u.ac.jp/dspace/handle/2433/97887

\bibitem{Ka2}
S. Kawashima, \textit{Global existence and stability of solutions for discrete
velocity models of the Boltzmann equation}, Recent Topics in Nonlinear PDE (M. Mimura and T. Nishida, eds.),
Lecture Notes in Num. Appl. Anal., {\bf{6}}, Kinokuniya, 1983, 59--85.


\bibitem{KY} S. Kawashima, W.-A. Yong, Dissipative structure and
entropy for hyperbolic systems of balance laws, \textit{Arch.
Rational Mech. Anal.} {\bf{174}}, 345--364 (2004).

\bibitem{KY2} S. Kawashima and W.-A. Yong, Decay estimates for
hyperbolic balance laws, \textit{J. Anal. Appl.} {\bf{28}}, 1--33 (2009).





\bibitem{P1} Y.J.~Peng, Global existence and long-time behavior
of smooth solutions of two-fluid Euler-Maxwell equations,
\textit{Annales IHP Analyse Non Lin\'eaire} {\bf{29}}, 737--759 (2012).

\bibitem{P2}
Y.-J.~Peng, Stability of non-constant equilibrium solutions for Euler-Maxwell equations,
\textit{J. Math. Pure Anal.} {\bf{103}}, 39--67 (2015).

\bibitem{PW1} Y.-J.~Peng and S.~Wang, Convergence of compressible
Euler-Maxwell equations to compressible Euler-Poisson equations,
\textit{Chin. Ann. Math.} {\bf{28}}, 583--602 (2007).

\bibitem{PW2} Y.-J.~Peng and S.~Wang, Convergence of compressible
Euler-Maxwell equations to compressible Euler equations,
\textit{Comm. Partial Differential Equations} {\bf{33}},
349--376 (2008).

\bibitem{PW3} Y.-J.~Peng and S.~Wang, Rigorous derivation of
incompressible e-MHD equations from compressible Euler-Maxwell
equations, \textit{SIAM J. Math. Anal.} {\bf{40}}, 540--565 (2008).

\bibitem {PWG}
Y.-J. Peng, S. Wang and G.L. Gu, Relaxation limit and global existence of smooth solutions
of compressible Euler-Maxwell equations, \textit{SIAM J. Math. Anal.} {\bf{43}}, 944--970 (2011).

\bibitem{SK}
Y. Shizuta, S. Kawashima, Systems of equations of
hyperbolic-parabolic type with applications to the discrete
Boltzmann equation, \textit{Hokkaido Math. J.}  {\bf{14}}, 249--275 (1985)

\bibitem{SK2}
Y. Sugitani and S. Kawashima, Decay estimates of solutions to a semi-linear dissipative
plate equation, \textit{J. Hyper. Differ. Equ.} {\bf{7}}, 471--501 (2010).

\bibitem{SS}
V.~Sohinger and R. M.~Strain, The Boltzmann equation, Besov spaces, and optimal time decay rates in $\mathbb{R}^{n}_{x}$, \textit{Adv. Math.}
{\bf{261}}, 274--332 (2014).

\bibitem{TWW}
Z.Tan, Y.J.Wang and Y.Wang, Decay estimates of solutions to the compressible Euler-Maxwell system in $\mathbb{R}^3$,
\textit{J. Differential Equations} {\bf{257}}, 2846--2873 (2014).

\bibitem{UDK}
Y. Ueda, R.-J., Duan and S. Kawashima, Decay structure for symmetric hyperbolic systems with non-symmetric relaxation and its application,
\textit{Arch. Rational Mech. Anal.} {\bf{205}}, 239--266 (2012).

\bibitem{UK}
Y. Ueda and S. Kawashima, Decay property of regularity-loss type for the Euler-Maxwell
system, \textit{Methods Appl. Anal.} {\bf{18}}, 245--268 (2011).

\bibitem{UKS}
T.Umeda, S. Kawashima and Y. Shizuta, On the decay of solutions to the linearized equations of electro-magneto-fluid dynamics, \textit{Japan J. Appl. Math.} {\bf{1}}, 435--457 (1984)

\bibitem{UWK}
Y. Ueda, S. Wang and S. Kawashima, Dissipative structure of the regularity-loss type and
time asymptotic decay of solutions for the Euler-Maxwell system, \textit{SIAM J. Math. Anal.}
{\bf{44}}, 2002--2017 (2012).

\bibitem{W}
J.~H.~Wu, Lower bounds for an integral involving fractional
Laplacians and the generalized Navier-Stokes equations
in Besov spaces, \textit{Commun. Math. Phys.} {\bf{263}}, 803--831 (2005).


\bibitem{X}
J. Xu, Global classical solutions to the compressible Euler-Maxwell equations, \textit{SIAM J. Math.
Anal.} {\bf{43}}, 2688--2718 (2011).

\bibitem{XK1}
J. Xu and S. Kawashima, Global classical solutions for partially dissipative hyperbolic system
of balance laws, \textit{Arch. Rational Mech. Anal.} {\bf{211}}, 513--553 (2014).

\bibitem{XK2}
J. Xu and S. Kawashima, The optimal decay estimates on the framework of Besov spaces for generally dissipative systems,
\textit{Arch. Rational Mech. Anal.} {\bf{218}}, 275--315 (2015).

\bibitem{XMK}
J. Xu, N. Mori and S. Kawashima, $L^p$-$L^{q}$-$L^{r}$ estimates and minimal decay regularity for compressible Euler-Maxwell equations,
\textit{J. Math. Pures Appl.} {\bf{104}}, 965--981 (2015).

\bibitem{XX}
J. Xu and Q. R. Xu, Diffusive relaxation limits of compressible Euler-Maxwell equations,
\textit{J. Math. Anal. Appl.} {\bf{386}}, 135--148 (2012).

\bibitem{XXK}
J. Xu, J. Xiong and S. Kawashima, Global well-posedness in critical Besov spaces for
two-fluid Euler-Maxwell equations, \textit{SIAM J. Math. Anal.} {\bf{45}}, 1422--1447 (2013).

\bibitem{Y} W.-A. Yong, Entropy and global existence for hyperbolic balance laws, \textit{Arch.
Rational Mech. Anal.} {\bf{172}}, 247--266 (2004).

\bibitem{Z}
Y.Zhou, Local well-posedness for the incompressible
Euler equations in the critical Besov spaces, \textit{Ann. Inst.
Fourier} {\bf{54}}, 773--786 (2004).
\end{thebibliography}
\end{document}